\newcommand{\Rr}{{\mathbb{R}}}
\newcommand{\Zz}{{\mathbb{Z}}}
\newcommand{\Tt}{{\mathbb{T}}}
 \def \Bbb{\mathbb}
 \def \cal{\mathcal}
 \def\-lim{{\text{\rm -lim}}}
 \def\nulo{\diameter}
 \def\0{\nulo}
 \theoremstyle{plain}
 \newtheorem{Thm}{\bf Theorem}[section]
 \newtheorem{lemma}[Thm]{\bf Lemma}
 \newtheorem{theorem}[Thm]{\bf Theorem}
 \newtheorem{proposition}[Thm]{\bf Proposition}
 \theoremstyle{definition}
 \newtheorem{Definition}[Thm]{\bf Definition}
\newcommand{\Hh}{{\overline{H}}}
\newcommand{\Ll}{{\overline{L}}}
\newcommand{\Cg}{{\mathcal{C}}}
\newcommand{\Mm}{{\mathcal{M}}}
\newcommand{\Rm}{{\noindent \sc Remark. \ }}
\theoremstyle{definition}
\begin{document}
\title{Minimax probabilities for Aubry-Mather Problems }
\author{Diogo A. Gomes, Nara Jung and Artur O. Lopes}

\begin{abstract}
In this paper
we study minimax Aubry-Mather measures and its main properties.
We consider first the discrete time problem and then the continuous time case.
In the discrete time problem we establish existence,
study some of the main properties using duality theory
 and present some examples.
In the continuous time case, we establish both existence and non-existence results.
First we give some examples that show that in continuous time stationary minimax Mather measures
are either trivial or fail to exist. A more natural definition in continuous time
are $T$-periodic minimax Mather measures. We give a complete characterization of these measures and discuss several examples.
\end{abstract}

\maketitle

\thanks{D. Gomes was
partially supported by the Center for Mathematical Analysis,
Geometry and Dynamical Systems through FCT Program POCTI/FEDER and
also by grant DENO/FCT-PT (PTDC/EEA-ACR/67020/2006). A. O. Lopes \, was \,
partially \, supported by CNPq, PRONEX -- Sistemas Din\^amicos,
Instituto do Mil\^enio, and is beneficiary of CAPES financial
support. }


\section{Introduction}

The main purpose of the present paper is to define and  analyze some basic properties of minimax Aubry-Mather measures.
In the first part of this paper we consider a minimax analog of the discrete-time Aubry-Mather theory (see \cite {Gom2} and also the last section of \cite{GLM}) and in the second part the continuous time case (see \cite{Mat} \cite{BG} \cite{CI} \cite{Fa}).
As a motivation to study minimax Mather measures, consider, in the continuous time problem, the following  one-dimensional Lagrangian:
\[
L=\frac{v^2}{2}\,-\,U(x), \,\,\,\ x \in S^1.
\]
Suppose that $U$ has a point of maximum $x_M$ and a point of minimum
$x_m$. The (minimizing) Mather measure is simply $\delta(x-x_M)\delta(v)$,
where $\delta(v)$ is the Dirac delta on $v=0$. According to the definition of
minimax Mather measure given later in the paper, the minimax measures for this
Lagrangian is $\mu=\delta(x-x_m)\delta(v)$. This measure is more natural from the point of view of the physical problem
as it is supported in the minimum of the potential energy.


We point out that some authors  consider previously minimax orbits (instead of measure like here) \cite{Mat1} \cite{LV}.

In the last few years the study of global minimizers has
been an extremely active research area and is the main focus of the
so called Aubry-Mather theory (see \cite{CI}, \cite{Fa} and \cite{BG}).
In this setting, one replaces the problem of determining orbits
that minimize the action with the problem of finding measures
$\mu(x,v)$ which minimize the action
\[
\int L(x,v) d\mu
\]
and satisfy certain holonomy constraints. These measures, which are  invariant under
the Euler-Lagrange flow,
give rise to global minimizing orbits,
and are extremely important
in understanding qualitative features of the dynamics.


In this paper we work both in the discrete and continuous time setting. We assume the following hypothesis on the Lagrangian:
$L(x,v):\Tt^n\times \Rr^n\to \Rr$, where $\Tt^n$ is the $n$-torus, identified with $\Rr^n$ when convenient, in this case $L$ is $\Zz^n$ periodic in $x$, that is
$L(x+k,v)=L(x,v)$, for all $k\in \Zz^n$. We assume further that $L$ is smooth, strictly convex in $v$:
\[
D^2_{vv}L(x, v)\geq \gamma>0,
\]
for some constant $\gamma$, and coercive (also called super-linear), that is,
\[
\lim_{|v|\to \infty}\frac{L(x, v)}{|v|}=\infty.
\]

Remember that:

\begin {theorem}
Consider $\mathbb{X}=TM$ and  $X=M$, $\mu$ a probability measure over $TM$,
$\pi_1: TM \to M$, such that $\pi_1(x,v)=x$, and
$\theta=(\pi_{1})_{*}(\mu)$. Then there exists a family of
probabilities $\pi(x,v)=\{\pi\}_{x \in M}$ over $TM_x$, uniquely
determined
$\theta$-a.e., such that,\\
1) $\pi_{x}(TM \backslash \pi_1^{-1}(x))=0$, $\theta$-a.e.;\\
2) $\int g(x,v) \mu (dx,dv) = \int_{M}\int_{\pi_1^{-1}(x)}
g(x,v)d\pi(x,dv) \theta(dx)$.
\end{theorem}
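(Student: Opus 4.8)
The plan is to prove this by the classical disintegration-of-measures argument, using that $M$ and $TM$, being second-countable smooth manifolds, are Polish spaces on which every finite Borel measure is Radon (no compactness is needed; only the fibre direction of $TM$ will cause a mild difficulty). Fix a metric $d$ on $M$ inducing its topology, an exhaustion $K_1\subseteq K_2\subseteq\cdots$ of $TM$ by compact sets with $TM=\bigcup_j K_j$, and continuous functions $\chi_j\colon TM\to[0,1]$ with $\chi_j\equiv1$ on $K_j$ and $\chi_j\le\chi_{j+1}$.

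The starting observation is that for every bounded Borel $g\colon TM\to\Rr$ the set function $A\mapsto\int_{\pi_1^{-1}(A)}g\,d\mu$ is a finite signed measure on $M$ vanishing on $\theta$-null sets (since $\theta(A)=0$ forces $\mu(\pi_1^{-1}(A))=0$), so by the Radon--Nikodym theorem there is $h_g\in L^1(\theta)$, unique up to $\theta$-null sets, with
\begin{equation}
\int_A h_g\,d\theta=\int_{\pi_1^{-1}(A)}g\,d\mu\qquad\text{for every Borel set }A\subseteq M.
\tag{$\ast$}
\end{equation}
From $(\ast)$ one reads off that $g\mapsto h_g$ is linear and positivity-preserving, satisfies $h_1=1$ $\theta$-a.e. and $\|h_g\|_{L^\infty(\theta)}\le\|g\|_\infty$, obeys the intertwining relation $h_{(\phi\circ\pi_1)\,g}=\phi\cdot h_g$ $\theta$-a.e. for bounded Borel $\phi$ on $M$, and has $h_{g_k}\downarrow0$ $\theta$-a.e. whenever $g_k\downarrow0$ pointwise. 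Now fix a countable $\Qq$-subalgebra $\Dd=\{g_n\}\subset C_0(TM)$ that contains the constants and all the $\chi_j$ and is dense in $C_0(TM)$, together with a countable family $\{\phi_m\}\subset C_b(M)$ that determines Borel probability measures on $M$. Choosing representatives of $h_{g_n}$ and of $h_{(\phi_m\circ\pi_1)g_n}$ and imposing the countably many almost-everywhere identities above — $\Qq$-linearity and $\Qq$-multiplicativity on $\Dd$, positivity, $h_1=1$, $h_{\chi_j}\uparrow1$, and the intertwining with the $\phi_m$ — yields a $\theta$-conull set $M_0\subseteq M$ on which all of them hold and each $h_{g_n}(x)$ is finite. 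The key point is that $\int_M h_{\chi_j}\,d\theta=\int_{TM}\chi_j\,d\mu\uparrow\mu(TM)=1$ together with $0\le h_{\chi_j}\le h_{\chi_{j+1}}\le1$ forces $h_{\chi_j}(x)\uparrow1$ on $M_0$, the tightness needed to prevent escape of mass to infinity.

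For $x\in M_0$ the map $g_n\mapsto h_{g_n}(x)$ is a positive, normalized $\Qq$-linear functional on $\Dd$ with $h_{\chi_j}(x)\uparrow1$; it extends uniquely to a positive linear functional of norm one on $C_0(TM)$ subject to that bound, so by the Riesz representation theorem it is integration against a Borel probability measure $\pi_x$ on $TM$. For $x\notin M_0$ put $\pi_x=\delta_{s(x)}$ for a fixed Borel section $s$ of $\pi_1$. Property (2) is then obtained by taking $A=M$ in $(\ast)$, giving $\int_{TM}g\,d\mu=\int_M\bigl(\int_{TM}g\,d\pi_x\bigr)\theta(dx)$ for $g\in\Dd$, and extending to all bounded Borel $g$ (hence to $L^1(\mu)$) by a monotone-class / dominated-convergence argument; once (1) is known, $\int_{TM}g\,d\pi_x=\int_{\pi_1^{-1}(x)}g\,d\pi_x$, which is the form in the statement. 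For property (1), the intertwining relation applied with the $\phi_m$ gives, for all $x\in M_0$, $\int\phi_m(\pi_1(z))\,g_n(z)\,d\pi_x(z)=\phi_m(x)\int g_n(z)\,d\pi_x(z)$; letting $g_n=\chi_j\uparrow1$ yields $\int\phi_m(\pi_1(z))\,d\pi_x(z)=\phi_m(x)$ for every $m$, so $(\pi_1)_*\pi_x$ agrees with $\delta_x$ on the measure-determining family $\{\phi_m\}$, whence $(\pi_1)_*\pi_x=\delta_x$, i.e. $\pi_x\bigl(TM\setminus\pi_1^{-1}(x)\bigr)=0$. For uniqueness $\theta$-a.e., given another such family $\{\pi'_x\}$, property (1) for $\pi'$ lets one rewrite property (2) as $\int_M\phi(x)\bigl(\int g_n\,d\pi'_x\bigr)\theta(dx)=\int(\phi\circ\pi_1)g_n\,d\mu$ for all bounded Borel $\phi$ — the same identity the $\pi_x$ satisfy — so $\int g_n\,d\pi_x=\int g_n\,d\pi'_x$ $\theta$-a.e. for each $n$, and since $\Dd$ determines Borel measures on $TM$, $\pi_x=\pi'_x$ for $\theta$-a.e. $x$.

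The only genuinely delicate point is the one already flagged: since $TM$ is non-compact in the velocity variable, a positive normalized functional on $C_0(TM)$ would a priori only correspond to a \emph{sub}-probability measure, and one must rule out loss of mass at infinity — this is precisely what the monotone exhaustion $\{\chi_j\}$ and the elementary bound $\int_M h_{\chi_j}\,d\theta=\int_{TM}\chi_j\,d\mu\uparrow1$ achieve. Equivalently, one may run the Riesz representation on the one-point compactification $TM\cup\{\infty\}$ and use the same bound to conclude that the resulting measure gives zero mass to the point at infinity. Everything else is the routine countable bookkeeping of the disintegration argument.
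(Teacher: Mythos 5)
Your proposal is correct in substance, but note that the paper itself offers no proof of this statement: it is quoted as a known result (the disintegration, or conditional-measure, theorem) with a pointer to Dellacherie--Meyer, III-70. What you have written is essentially a self-contained rendition of that standard proof --- Radon--Nikodym applied to the family of measures $A\mapsto\int_{\pi_1^{-1}(A)}g\,d\mu$, countable bookkeeping over a dense subalgebra, and Riesz representation to assemble the functionals $g\mapsto h_g(x)$ into genuine measures $\pi_x$ --- and you correctly identify and handle the only point specific to this setting, namely that the fibres $TM_x\cong\Rr^n$ are non-compact, so one must use the exhaustion $\chi_j$ and the identity $\int_M h_{\chi_j}\,d\theta=\int\chi_j\,d\mu\uparrow 1$ to rule out escape of mass (or, equivalently, pass to the one-point compactification). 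One cosmetic slip: a $\Qq$-subalgebra of $C_0(TM)$ cannot literally ``contain the constants'' when $TM$ is non-compact; you should either adjoin the constants and work in $C_0(TM)+\Rr\cdot 1$ (i.e.\ on the one-point compactification, as you yourself suggest at the end), or phrase the normalization purely through $h_{\chi_j}\uparrow 1$. With that repair the argument is complete, and it proves exactly what the paper needs, including the a.e.\ uniqueness and the fibre-concentration property (1) via $(\pi_1)_*\pi_x=\delta_x$.
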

Such decomposition is called disintegration of the probability
$\mu$ (see \cite{DM} III-70 for a proof). \vspace{0.3cm}

Here, any probability $\mu(x,v)$ in the tangent bundle of the torus will be taken in a disintegrated form $\mu(x,v) = \theta (x) \pi(x,v)$. Using this point of view, we can restate the classical  Aubry-Mather problem in a way that will be suitable
for generalization.

Fix $Q\in \Rr^n$, the rotation vector.
Mather's discrete problem (see, for instance \cite{Gom2})  consists in minimizing
\[
\Ll(Q)=
\inf_{\theta}\,\,\{\,\inf_{\pi\,\in\, \Pi\,(\,\theta \, ,\, Q\,)}\int \,L(x,v)\, \pi(x,dv)\, \theta(dx)\,\},
\]
in which the infimum
is taken over all probability measures $\theta(x)$
supported in $\Tt^n$, and $\Pi(\theta, Q)$ denotes the set of
Borel measures $\pi(x,v)$ such
that, for
each fixed $x\in \Tt^n$, we have that $\pi(x,v)$ are probability measures on $v$ which satisfy
the following two constraints
\begin{equation}
\label{c1}
\int\, \phi(x+v)\, \pi(x,dv)\,\theta(dx)\,=\,\int\phi(x) \,d\theta(dx),
\end{equation}
and
\begin{equation}
\label{c2}
\int \,v \,\pi(x,dv)\,\theta(dx)\,=\,Q,
\end{equation}
with  $Q\in \Rr^n$.

The first constrain is called the holonomic constraint. The second is called the homological constraint.
We point out that the difference between  the discrete and the continuous Mather problem is  the homolonic constrain (compare (1) above with Definition 7.1).

We point out that in the classical (continuous) Mather problem the minimization of the Lagrangian on the holonomic probabilities supported on the tangent bundle is realized by a probability which is invariant with respect to the associated Euler-Lagrange flow \cite{CI} \cite{Fa} \cite{BG}.

In optimal transport theory, each element $\pi\in \Pi (\theta, Q) $ is
usually called an admissible plan.
The constraint \eqref{c2} imposes a fixed average rotation number $Q$
of the plan $\pi$ with respect
to $\theta$. The function $\Ll(Q)$ is called
the effective or averaged Lagrangian. This problem
is a discrete version of the standard Aubry-Mather problem (see \cite{CI} \cite{Fa}, for instance)
but the
notation is more convenient for our purposes, as
it is easier to generalize this problem to the minimax case, as we
will see.

In the first part of this paper we propose to study
a problem closely related to the discrete Mather problem,
the minimax problem:
\[
\hat L(Q)=
\sup_{\theta}\,\,\{\,\inf_{\pi\,\in\, \Pi\,(\,\theta, \,Q\,)}\int L(x,v)\, \pi(x,dv)\,\theta(dx)\,\},
\]
and investigate its connection with the discrete Lagrangian
dynamics.

\begin{Definition}
Given a vector $Q\in \Rr^n$
a $Q$-minimax measure  $\mu=\pi\,\theta$, $\pi\,\in\, \Pi\,(\,\theta, \,Q\,)$,
 is a probability measure on the tangent bundle, such that,
\begin{enumerate}
\item
$\int L\,d\mu\leq \int L d (\theta \tilde \pi)$, for all $\tilde \pi\,\in\, \Pi\,(\,\theta, \,Q\,)$.
\item
For any probability measure $\tilde \theta$,
\[
\int L\,d\mu \geq \inf_{\tilde \pi \in \Pi(\tilde \theta, Q)}\int L d (\tilde \theta \tilde \pi)
\]
\end{enumerate}
If $\mu$ is a $Q$-minimax measure, we define
\[
\hat L(Q)=\int L\,d\mu.
\]

\end{Definition}

In the discrete time setting, we will prove
the existence of minimax
measures $\mu=\pi \,\theta$,
and give a variational characterization
of $\hat L$ in terms of the dual problem. Later, we will consider the continuous time minimax problem.
We will give explicit examples of non-existence of minimax measures. Then, building upon the ideas in \cite{BB}
we study time-periodic minimax Mather measures, which from the point of view of the dynamics are interesting
objects related to minimax periodic orbits.



The semiclassical limit of
the Schrodinger operator and its connections with Aubry-Mather measures were investigated in
\cite{L1} and \cite{Gom1}. However, in the semiclassical limit setting,
minimax Mather measures may in fact be a more natural object, for instance, if one considers Wigner measures associated to
the ground state eigenfunction of the Schrodinger operator one obtains the minimax Aubry-Mather measure as the weak limit.

The plan of the paper is as follows: after some formal calculations in the next section, we present in section 3 
some examples of minimax measures.
In section 4 we show the existence of  the minimax measure in the general discrete time case.
Sections 5 and 6 consider duality and semi-convexity for the  minimax measure in the discrete time case. In section 7
we prove the non-existence of stationary Mather measures, for continuous time problems, whereas in section
8 we show the existence of the minimax periodic Mather measures. We introduce the concept of $T$-minimax probability, for a real $T>0$ fixed. This will be a family of probabilities $\rho(x,v,t)$ on the tangent bundle, indexed by $t\in[0,T]$. Finally, in section 9 we present some additional examples in continuous time.

As we will show, the $T$-periodic minimax Mather measures contain $T$-periodic minimax orbits. As $T\to \infty$ the minimax periodic orbits may converge to heteroclinic or homoclinic connections between minimizing orbits. Therefore, in some sense, they contain relevant information about the connection structure between minimizing orbits.

Finally, we point out a different, but analogous line of problems. Recently  in partial differential equations
minimax problems were also considered \cite{RS} \cite{AM} \cite{Be} \cite{LV1} \cite{MMW}. 

\section{Formal computations}

To get some insight on the minimax problem,
we start by performing some formal computations. First, we
introduce Lagrange multipliers $u$ and $P$ for the constraints
(\ref{c1}) and (\ref{c2}) (see \cite{Gom2} for the Aubry-Mather setting). The Lagrange multiplier $u$ for the constraint
(\ref{c1}) is a continuous function $u: \Tt^n\to \Rr$; for the constraint \eqref{c2}, we take  $P\in \Rr^n$.
We have the identity
\begin{multline*}
\hat L(Q)=
\sup_{\theta}\inf_{\pi\in \Pi(\theta,Q)}\sup_{P,u}
\int \left[L(x,v)+\right.\\
\left.u(x+v)-u(x)+P\cdot(v-Q)\right]\pi(x,dv)\theta(dx).
\end{multline*}
Applying the minimax principle to the last expression (which we will prove using duality theory
in section \ref{dualsec}),  that is,
exchanging the infimum with the last supremum,
we obtain:
\begin{multline*}
\hat L(Q)=
\sup_{\theta}\,\,\sup_{P\, , \,u}\,\,\inf_{\pi\,\in\, \Pi\,(\,\theta\,,\,Q\,)}\,
\int \left[L(x,v)+u(x+v)\right.\\ \left.-
u(x)+P\cdot(v-Q)\right]\pi(x,dv)\theta(dx).
\end{multline*}
This identity
implies that $\pi(x,v)$ is supported at points $v$ which minimize
\[
L(x,v)+u(x+v)+P\cdot v.
\]
Thus, if $u$ is differentiable,
\begin{equation}
\label{fe}
P+D_vL(x,v)+D_xu(x+v)=0,
\end{equation}
$\pi$-almost everywhere for $\theta$ almost every $x$.
Furthermore
\begin{multline*}
\hat L(Q)=
\sup_{u,P}\,\{-P\cdot Q\\+\sup_\theta
\int \inf_v\left[L(x,v)+u(x+v)-u(x)+P\cdot v \right]\theta(dx)\,\}.
\end{multline*}
This last identity yields
 that $\theta$ must be supported at the maximizers of
\[
\inf_v\left[L(x,v)+u(x+v)-u(x)+P\cdot v \right].
\]
Consequently, if we assume again $u$ to be differentiable,
\begin{equation}
\label{se}
D_xL(x, v^*)+D_xu(x+v^*)-D_xu(x)=0,
\end{equation}
$\theta$ almost everywhere, in which $v^*$ satisfies (\ref{fe}) and we assume
that it is a $C^1$ function of $x$.

We can define the Hamiltonian $H$ corresponding to $L$ by the Legendre
transform
\begin{equation}
\label{lt}
H(p,x)=\sup_v\, \{ - p\cdot v -L(x, v)\} .
\end{equation}
The Hamiltonian is smooth, periodic in $x$,
coercive,
and, as we assume that $L$ is strictly convex on $v$, we have that $H$ is strictly convex in $p$.

\medskip

\Rm
We consider in \eqref{lt}, the Optimal Control definition for the Hamiltonian.
In Classical Mechanics the
Hamiltonian is usually defined as
\[
\check{H}(p, x)=\sup_v (\, p\cdot v-L(x, v)).
\]
These two definitions differ  by the sign of  $p\cdot v$.  Therefore,
if we replace  $L(x,v)$ by the symmetrical Lagrangian, i.e., $\check{L}(x,v)=L(x,-v)$, then
$$
\check{H}(p,x)=\max_v \{p\cdot v - \check{L}(x,v)\}=\max_v \{-p\cdot v - L(x,v)\}.
$$


Observe that \eqref{se} can be written as
\[
(P+D_xu(x+v^*))-(P+D_xu(x))=D_xH(P+D_xu(x),x).
\]
Therefore, if we define
\[
x_{n+1}-x_n=v^*(x_n),
\]
and $p_n=P+D_xu(x_n)$ then $(x_n,p_n)$ satisfies the discrete form
of Hamilton's equations:
\begin{equation}
\label{disdin}
p_{n+1}-p_n=D_xH(p_{n+1},x_n)\qquad x_{n+1}-x_n=-D_pH(p_{n+1},x_n).
\end{equation}

Consider the measure in $\Tt^n\times \Rr^n$
which projects in $\Tt^n$ to $\theta$ and
is supported in the graph
\[
(x,p)=(x,P+D_xu(x)).
\]
Then this measure is invariant under (\ref{disdin}), as we reinterpret (\ref{fe}) and (\ref{se})
appropriately.

Finally, let
\[
-\hat H(P)=\sup_{u\in C(\Tt^n),x\in \Tt^n}\,\{
\inf_v\left[L(x,v)+u(x+v)-u(x)+P\cdot v\right]\,\}.
\]
Then
\[
\hat L(Q)=\sup_P\,\{ -P\cdot Q-\hat H(P)\},
\]
which shows that $\hat L$ is a convex function.

Bellow $\Hh(P)$ denotes the Legendre dual of $\Ll(Q)$.

We will record, for future reference some elementary properties of
$\hat L$ and $\hat H$:
\begin{proposition}
Suppose $c|v|^2-C\leq L(x,v)\leq C|v|+C$, for suitable constants $c, C$.
We have
\begin{enumerate}
\item $\Ll(Q)\leq \hat L(Q)$
\item $\Hh(P)\geq \hat H(P)$
\item $-C+c|Q|^2\leq \hat L(Q)\leq C+C|Q|^2$
\item $-C+c|P|^2\leq \hat H(P)\leq C+C|P|^2.$
\end{enumerate}
\end{proposition}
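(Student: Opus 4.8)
The plan is to prove the four inequalities essentially independently, each by unravelling the relevant definitions and invoking the assumed two-sided bound $c|v|^2-C\le L(x,v)\le C|v|+C$ together with elementary estimates.

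For (1), I would start from the definitions
\[
\Ll(Q)=\inf_\theta\inf_{\pi\in\Pi(\theta,Q)}\int L\,d(\theta\pi),
\qquad
\hat L(Q)=\sup_\theta\inf_{\pi\in\Pi(\theta,Q)}\int L\,d(\theta\pi),
\]
and observe that for \emph{each} admissible $\theta$ (for which $\Pi(\theta,Q)\ne\emptyset$) the inner quantity $\inf_{\pi\in\Pi(\theta,Q)}\int L\,d(\theta\pi)$ is bounded below by $\Ll(Q)$, since $\Ll(Q)$ takes a further infimum over $\theta$. Taking the supremum over $\theta$ on the left gives $\hat L(Q)\ge\Ll(Q)$. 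For (2), I would use the representation derived in the formal computations section,
\[
-\hat H(P)=\sup_{u\in C(\Tt^n),\,x\in\Tt^n}\inf_v\bigl[L(x,v)+u(x+v)-u(x)+P\cdot v\bigr],
\]
and compare it with the corresponding variational formula for $\Hh(P)$, the Legendre dual of $\Ll(Q)$: after the duality/minimax exchange one has $-\Hh(P)=\sup_{u}\inf_\theta\int\inf_v[\cdots]\,\theta(dx)$, and since $\inf_\theta\int f\,\theta(dx)=\inf_x f(x)$ (infimum over probability measures equals infimum over points), the two formulas coincide except that in $\hat H$ one optimizes the pointwise $\inf_v$-expression over $x$ by a supremum while in $\Hh$ by an infimum; hence $-\Hh(P)\le-\hat H(P)$, i.e. $\Hh(P)\ge\hat H(P)$. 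Alternatively, and perhaps cleaner, (2) follows from (1) by Legendre duality, since $\hat L\ge\Ll$ pointwise forces the reverse inequality for the duals—this is the route I would actually take, to avoid re-deriving the minimax exchange here.

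For (3) and (4) I would exploit the $Q$-minimax (resp. $P$-minimax) characterization and the growth bounds on $L$. For the upper bound in (3): pick any convenient admissible pair, e.g. $\theta$ a point mass (or Lebesgue) and $\pi$ concentrated on the constant velocity $v\equiv Q$ (which satisfies the homological constraint $\int v\,d(\theta\pi)=Q$ automatically; one checks the holonomic constraint is satisfiable, e.g. with $\theta$ invariant under the translation $x\mapsto x+Q$), so that $\hat L(Q)=\sup_\theta\inf_\pi\int L\,d(\theta\pi)\le$ ... here one must be slightly careful because of the outer supremum over $\theta$, so instead I would bound the inner infimum uniformly: for every $\theta$ and every $\pi\in\Pi(\theta,Q)$ with minimal action, $\int L\,d(\theta\pi)\le C\int|v|\,d(\theta\pi)+C$, and by Jensen plus the constraint $\int v\,d(\theta\pi)=Q$ one does \emph{not} immediately get $|Q|^2$—one gets only control by $\int|v|^2$. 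The honest bound is: the upper estimate $\hat L(Q)\le C+C|Q|^2$ comes from comparing with $\Ll(Q)$, which by a standard Tonelli-type argument satisfies $\Ll(Q)\le C+C|Q|^2$ using the test plan $v\equiv Q$; but $\hat L\ge\Ll$ goes the wrong way, so one instead bounds $\hat L(Q)=\int L\,d\mu$ for the minimax $\mu$ using property (2) of the Definition with $\tilde\theta$ chosen as the test measure, giving $\int L\,d\mu\ge\inf_{\tilde\pi}\int L\,d(\tilde\theta\tilde\pi)$—again the wrong direction. So the correct argument for the upper bound uses property (1) of the minimax measure: $\int L\,d\mu\le\int L\,d(\theta\tilde\pi)$ for all $\tilde\pi\in\Pi(\theta,Q)$, and now plugging the explicit test plan concentrated near $v=Q$ yields $\hat L(Q)\le C+C|Q|^2$. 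The lower bound $\hat L(Q)\ge -C+c|Q|^2$: from $L(x,v)\ge c|v|^2-C$ and Jensen, $\int L\,d(\theta\pi)\ge c\int|v|^2\,d(\theta\pi)-C\ge c|\int v\,d(\theta\pi)|^2-C=c|Q|^2-C$ for every admissible $\theta,\pi$; taking $\inf_\pi$ then $\sup_\theta$ preserves this. Item (4) then follows from (3) by Legendre duality: if $\hat L(Q)=\sup_P\{-P\cdot Q-\hat H(P)\}$ (proved in the formal section) and $-C+c|Q|^2\le\hat L(Q)\le C+C|Q|^2$, then standard Legendre-transform estimates convert the quadratic bounds on $\hat L$ into quadratic bounds on $\hat H$.

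The main obstacle is bookkeeping rather than depth: one must be careful about the \emph{direction} of each estimate, because $\hat L$ is defined via a $\sup$--$\inf$, so not every step that works for the minimizing $\Ll$ transfers. The cleanest organization is to prove (1) and the two bounds on $\hat L$ in (3) directly from definitions and the growth of $L$, then obtain (2) and (4) purely formally from (1) and (3) by Legendre duality, using the identity $\hat L(Q)=\sup_P\{-P\cdot Q-\hat H(P)\}$ established in the Formal Computations section. One should also note the harmless abuse that all these statements are understood on the domain where $\Pi(\theta,Q)$ is nonempty, which is all $Q\in\Rr^n$ here since the coercivity and the torus structure guarantee existence of admissible plans.
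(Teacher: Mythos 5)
Your items (1), the lower bound in (3), and the duality reductions for (2) and (4) match the paper's proof in substance: the paper also dismisses (1) and (2) as immediate from the definitions and from the Legendre transform, gets $\hat L(Q)\ge -C+c|Q|^2$ from the corresponding (known) bound on $\Ll(Q)$ together with item (1), and converts that into $\hat H(P)\le C+C|P|^2$ by duality. Your Jensen argument for the lower bound on $\hat L$ is a self-contained variant of the same estimate and is fine.

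The genuine gap is in your treatment of the upper bound $\hat L(Q)\le C+C|Q|^2$. You correctly diagnose that the outer supremum over $\theta$ blocks the naive comparison, and you correctly land on using property (1) of the minimax measure (or, equivalently, bounding $\inf_{\pi\in\Pi(\theta,Q)}\int L\,d(\theta\pi)$ uniformly in $\theta$). But the test plan you then propose, ``concentrated near $v=Q$,'' does not in general belong to $\Pi(\theta,Q)$: the holonomy constraint $\int\phi(x+v)\,\pi(x,dv)\,\theta(dx)=\int\phi\,d\theta$ forces $\theta$ to be invariant under $x\mapsto x+Q$, and $\theta$ is not at your disposal here --- it is whatever measure the supremum produces (indeed the paper shows maximizers can be point masses, for which $\delta_{v=Q}$ is inadmissible unless $Q\in\Zz^n$). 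The repair is to use plans supported on \emph{integer} velocities: write $Q=\sum_i\lambda_i k_i$ with $k_i\in\Zz^n$, $|k_i|\le |Q|+C$, and take $\pi=\sum_i\lambda_i\delta_{v=k_i}$, which satisfies the holonomy constraint for \emph{every} $\theta$ since $x+k_i=x$ on $\Tt^n$; then $L\le C|v|+C$ gives the claimed bound. The paper avoids the issue by working entirely on the dual side: it bounds $-\hat H(P)$ from above by substituting the integer vector $v=[-P]$ into
\[
\sup_{u,x}\inf_v\left[L(x,v)+u(x+v)-u(x)+P\cdot v\right],
\]
where periodicity of $u$ kills the $u(x+v)-u(x)$ term, yielding $\hat H(P)\ge -C+c|P|^2$, and then recovers $\hat L(Q)\le C+C|Q|^2$ from $\hat L(Q)=\sup_P\{-P\cdot Q-\hat H(P)\}$. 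Either route works, but in both the integrality of the test velocity is the essential point, and it is missing from your write-up.
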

\begin{proof}
The first and second items are obvious from
the definition of $\hat L(Q)$ and Legendre transform, respectively.

To prove the third item, recall the well known fact
\[
\Ll(Q)\geq -C+c|Q|^2,
\]
which immediately implies $\hat L\geq -C+c|Q|^2$.
This yields
\[
\hat H(P)\leq C+C|P|^2.
\]
To prove the other
inequality, observe that it is enough to show that
$\hat H(P)\geq -c+c|P|^2$.

Given a vector   $P\in \mathbb{R}^n$, and $C\geq 0$, denote by $v=[-P]$ the vector in $\mathbb{Z}^n$
which minimizes $C|v|^2+P \cdot v$ (although this vector may not be unique, this is irrelevant for our purposes).
 For large $P$ we have $C|[-P]|^2+P\cdot [-P]\leq  -c|P|^2$, for some $c>0$.

We have
\begin{align*}
-\hat H(P)&=\sup_{u\in C(\Tt^n),x\in \Tt^n}
\inf_v\left[L(x,v)+u(x+v)-u(x)+P\cdot v\right]\\
&\leq \sup_{u\in C(\Tt^n), x\in \Tt^n}\inf_v \left[
C+C|v|^2+u(x+v)-u(x) +P\cdot v
\right]
\\
&\leq C - c|P|^2,
\end{align*}
by setting $v=[-P]$,
which then implies the remaining inequality for $\hat L$.
\end{proof}

\section{Examples}

We will show in the following sections the existence of a minimax probability measure
with rotation $Q$  for the class of strictly convex and super-linear  Lagrangians.
Before proceeding, however we
present some examples.

Consider the one-dimensional case in which the Lagrangian is
\[
L=\frac{v^2}{2}\,-\,U(x).
\]
In this case $U(x)$ is the potential energy of the corresponding problem in Classical Mechanics problem.

Suppose that $U$ has a point of maximum $x_M$ and a point of minimum
$x_m$. The (minimizing) Mather measure
with rotation number $Q=0$ is simply $\delta(x-x_M)\delta(v)$,
where $\delta(v)$ is the Dirac delta on $v=0$ (see \cite{CI} \cite{Fa}).

We claim
that the minimax Mather measure for this Lagrangian is $\mu=\delta(x-x_m)\delta(v)$, when the  rotation  constrain is $Q=0$.
Furthermore, the plan $\pi(x,v)=\delta(v)$ is clearly optimal.
Indeed, suppose by contradiction,
that any other measure $\mu (x,v)= \theta(x) \, \pi(x,v)$ is given. For a fixed $x$, the
optimal plan is, of course, $\pi(x,v)=\delta(v)$, that is, for any other plan $\tilde \pi$ we have
\[
\int L \pi(x, dv)\leq \int L \tilde \pi (x, dv).
\]
Then
$$
\int_{\Tt^n\times \Rr^n} L (x,v)\,\delta(v)\,\theta(x)\,dx \,dv=$$
$$
\int_{\Tt^n} U(x)\,  \theta(x)\, dx\leq \max U=\int_{\Tt^n\times \Rr^n} L(x,v) \, \delta(x-x_m)\,\delta(v)\,dx\,dv    .
$$
Next we consider the case of nonzero rotation number.
Let us assume that
$0<Q<1$.
Suppose that $\theta(x)= \delta(x-x_m)$. We claim that the support of $\pi(x_m,dv)$ is contained in the set of points of the form $x_m + k$ with $k$ integer. Without loss of generality suppose $x_M=0$.
Then, considering $\phi(x) =e^{2 \, \pi i \,x}$, we get  $\int e^{2 \pi i v} \pi(0,dv) =\int \pi(0,dv)=1$
The first integral is a convex combination of points in the boundary of the complex unit disk. Since
all these points are extreme points and also so is 1 it follows easily that $\pi(0,dv)$ is supported
on the integers.

Define
\[
\mu_Q=(1-Q) \delta(x-0) \delta(v)+Q  \delta(x-0) \delta(v-1).
\]
An easy computation shows that (\ref{c1}) and (\ref{c2})
are satisfied. For any fixed $x$,
the plan $\pi(x,dv)=(1-Q)\delta(v)+Q\delta(v-1)$ is optimal.
We claim that the measure $\mu_Q$ is the min-max Mather measure.
Indeed, given any other measure $\mu=\theta\, \pi$ (with rotation vector $Q$) define
\[
\tilde \mu_Q=\theta(x)\left((1-Q)\delta(v)+Q\delta(v-1)\right).
\]
It is clear that $\tilde \mu_Q$ satisfies (\ref{c1}) and (\ref{c2}) and
\[
\int L d\tilde\mu_Q\leq \int Ld\mu_Q.
\]
From this follows the claim.

We say that a probability $d\mu(x,v)$ on the tangent bundle has the graph property, when for almost every point $x$ in the projection of the support of $\mu$, we have that the $v$ on the fiber over $x$, which puts $(x,v)$ in the support of $\mu$, is unique. It follows from the convexity assumption over $L$ that Mather measures on the tangent bundle have the graph property \cite{Mat} \cite{BG} \cite{CI}.

Therefore, in the case $0<Q<1$, the graph property is not true.






\section{Existence of mini-max probabilities: discrete time case}

Let $P(\Tt^n)$ be the set of probability measures on the $n$-dimensional
torus $\Tt^n$. In addition to the previous hypothesis on $L$ we assume further that
\begin{equation}
\label{starassup}
|D_x \,L(x,v)|\leq C\ \text{and}\ |D^2_{xx} L(x,v)|\leq C.
\end{equation}
For each $\theta\in P(\Tt^n)$, define
\[
g_Q(\theta):=\,\min_{\pi\in {\Pi}[\theta,Q]}
\int_{\Omega}L(x,v)\,\theta(dx)\, \pi(x,dv),
\]
where $\Omega={\Tt^n\times \Rr^n}$, and ${\Pi}[\theta,Q]$
is the set of all plans $\pi$ which satisfy
for all continuous functions $\phi\,:\Tt^n\rightarrow \Rr$,
\[
\int_{\Omega}[\,\phi(x+v)-\phi(x)\,]\, \pi(x,dv) \,\theta(dx) = 0, \text{and}
\int_{\Omega} v\ \pi(x,dv) \,\theta(dx) =Q.
\]
If there is no rotation vector constraint $Q$, we will just write $g(\theta)$.
For a fixed $\theta$, the minimizer $\pi=\pi_\theta$ for $g(\theta)$ clearly exists (by the assumptions we made on $L(x,v)$ on the variable $v$).

Define
$$
\hat L(Q)=
\sup_{\theta}\,\,\{\,\inf_{\pi\,\in\, \Pi\,(\,\theta, \,Q\,)}\int L(x,v)\, \pi(x,dv)\,\theta(dx)\,\}=\sup_{\theta} g_Q(\theta).
$$

Consider a sequence $\theta_n$ such that $ \lim_n g(\theta_n) = \hat L(Q).$

We can assume that
\[
g(\theta_n)=\int_{\Omega}L(x,v)\,\theta_n (dx) \,\pi_n(x,dv).
\]
 Remember that for a fixed $\theta_n$ the minimizer $\pi_n$ for $g(\theta_n)$ exists. One can consider weak limit of the probabilities $\theta_n$ over $\Tt^n$ and getting in this way limits denoted generically by $\theta$.  The main point bellow is to show that $g(\theta_n) \to g(\theta)$, where $\theta_n$ is one of this subsequences.
Now, given a certain $\theta$ there exist a minimizer $\pi$ for $g(\theta)$. Then, $\mu=\theta \, \pi$ is a minimax probability.

In order to show that  $g(\theta_n) \to g(\theta)$,
we introduce a metric $d(\theta_1,\theta_2)$, for     $ \theta_1,\theta_2\in       P(\Tt^n)$,  which is a simple variation of the usual Wasserstein metric \cite{Ambro}, \cite{Ra}. We will show that $g$ is continuous with respect to this metric.

By definition, $\Pi[\theta_1,\theta_2]$ is the set of probabilities $\mu(x,v)$ on the tangent bundle such that
\begin{enumerate}
\item
$
\int_{\Tt^n\times \Rr^n} \phi(x+v)\ d\mu (x,v)=\int \phi(x)\ d\theta_2(x)
$,
\item
$
\int_{\Tt^n\times \Rr^n} \phi(x)\ d\mu(x,v) = \int \phi(x)\ d\theta_1(x)$.
\end{enumerate}

Condition 2) above means that the marginal of $\mu$ on the x coordinate is $\theta_1$, that is,
$d\, \mu(x,v) = \theta_1(dx) \, \pi(x,dv).$

We explain now condition 1) when $x$ is one dimensional.
Let $\hat \theta_2$ be the measure in $\Rr$ we obtain
if we project the probability $\mu(x,v)$, with $x \in \Tt^1$ and $v\in \Rr^1$ (the infinite cylinder),
on the coordinate  $v      \in  \Rr^1    $ (that is, in the set $0\times \Rr^1$), through lines parallel to the diagonal. Then, by considering
 the probability $\hat{\theta}_2$ (mod\, 1), that is,
on $\Tt^1$, then we obtain $\theta_2$.

We say that $\theta_1$ is the marginal of $\mu$ in the first coordinate and $\theta_2$ is the (projected via diagonal) marginal of $\mu$ in the second coordinate.

Remark: We point out that due to the homological constrain given by $\int_{\Tt^n\times \Rr^n} \phi(x+v)\ d\mu (x,v)=\int_{\Tt^n\times \Rr^n} \phi(x)\ d\mu(x,v)$, for any $\varphi$, considered  in the
discrete Aubry-Mather theory \cite{Gom2} (minimization of the Lagrangian action among probabilities $\mu(x,v)=\theta(dx)\, \Pi(x,dv)$, with $(x,v)$ in the tangent bundle of $\Tt^n$), one  can be consider this problem
(via  projection through lines parallel to the diagonal) as a kind of transshipment minimization problem for the $\mu(x,y)$, with $x,y\in \Tt^n$, which have the same marginal $\theta$ on $x$ and $y$ variables. In the notation described above we have $\mu\in \Pi(\theta,\theta)$, but $\theta$ is free to move. Due to the homological condition the minimizing $\theta$ on both problems are the same.

\begin{Definition} Consider the metric
$$
d(\theta_1, \theta_2):\ = \inf_{\mu \in \Pi[\theta_1, \theta_2]}
\int_{\Tt^n\times \Rr^n} \,\frac{|v|^2}2\,\, d\mu(x,v)=$$
$$ \inf_{\mu \in \Pi[\theta_1, \theta_2]}
\int_{\Tt^n\times \Rr^n} \,{|v|^2\over 2}\, \,\theta_1(dx) \,\, \pi(x,dv).
$$
\end{Definition}

All usual properties of the Wasserstein metric $W( \theta_1,\theta_2)$ are also true for the distance $d$.
We will use bellow some techniques similar to the ones described in the  gluing lemma of \cite{Vi}, section 7.1.

\begin{theorem}
g is continuous with respect to the  metric $d$, that is,
\[
g(\theta_k)\rightarrow g(\theta),\,\ \text{when}\
d(\theta_k,\theta)\rightarrow 0.
\]
\end{theorem}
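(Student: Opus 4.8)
The plan is to prove continuity of $g$ by establishing two inequalities: lower semicontinuity and upper semicontinuity, both exploiting the metric structure and the coercivity/growth assumptions on $L$. The key technical device will be a gluing construction: given $\theta_k \to \theta$ in the metric $d$, we want to transfer near-optimal plans for $g(\theta)$ to admissible plans for $g(\theta_k)$ and vice versa, at a cost that vanishes with $d(\theta_k,\theta)$.

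First I would address upper semicontinuity, i.e. $\limsup_k g(\theta_k) \le g(\theta)$. Fix a near-optimal (in fact optimal) plan $\pi$ for $g(\theta)$, giving the measure $\mu = \theta\,\pi$ on $\Tt^n\times\Rr^n$. Let $\sigma_k \in \Pi[\theta_k,\theta]$ be a near-optimal transport plan for $d(\theta_k,\theta)$, so that $\int \frac{|v|^2}{2}\,d\sigma_k \to 0$. Using the gluing lemma (in the spirit of \cite{Vi}, section 7.1), I would glue $\sigma_k$ (which moves mass from $\theta_k$ to $\theta$) with $\mu$ (which moves mass from $\theta$ to $\theta$ via displacement $v$) to produce a measure on $\Tt^n\times\Tt^n\times\Rr^n$ whose first marginal is $\theta_k$ and which encodes a composite displacement; projecting appropriately yields a plan $\tilde\pi_k \in \Pi[\theta_k,Q_k]$ where $Q_k$ is close to $Q$ but not exactly $Q$. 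Correcting the rotation vector back to exactly $Q$ requires a small additional adjustment — a shift of a small amount of mass — whose cost is controlled because the defect $|Q_k - Q|$ is of order the transport cost. One then estimates $\int L\, d(\theta_k\tilde\pi_k) - \int L\,d(\theta\pi)$ using the Lipschitz-type bounds: $|D_xL|\le C$ controls the change from moving the base point, and the convexity plus the bound $L(x,v)\le C|v|+C$ together with the quadratic transport cost control the change from altering the displacement. The reverse inequality $\liminf_k g(\theta_k) \ge g(\theta)$ is symmetric: take optimal plans $\pi_k$ for $g(\theta_k)$, note their second moments $\int |v|^2 d(\theta_k\pi_k)$ are uniformly bounded (since $g(\theta_k)$ is bounded and $c|v|^2 - C \le L$), extract a weakly convergent subsequence of $\theta_k\pi_k$, verify the limit lies in $\Pi[\theta,Q]$ (the holonomy and homology constraints pass to the limit because $\phi(x+v)-\phi(x)$ and $v$ are continuous and the second moments are bounded, giving uniform integrability), and use lower semicontinuity of $\mu \mapsto \int L\,d\mu$ under weak convergence, which holds because $L$ is bounded below and lower semicontinuous (indeed continuous).

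The main obstacle I anticipate is the exact enforcement of the homological constraint $\int v\, d\mu = Q$ after the gluing step. Gluing and projection naturally produce plans whose rotation vector has drifted by an amount comparable to the transport distance, and one must show this drift can be removed cheaply. The clean way is to note that $g_Q$ and $\hat L$ depend on $Q$ in a controlled (e.g. locally Lipschitz or semiconvex) manner — which is plausible given the quadratic bounds in the Proposition above — so that $|g_{Q_k}(\theta_k) - g_Q(\theta_k)| \to 0$ whenever $Q_k \to Q$; alternatively one explicitly constructs the correction by perturbing $\pi_k$ on a set of small $\theta_k$-measure using bounded-displacement plans. A secondary subtlety is making precise the "projection through lines parallel to the diagonal" that defines condition (1) in higher dimensions, i.e. checking that the holonomy constraint is preserved under gluing; this is a bookkeeping matter once the one-dimensional picture described before the Definition is set up with the correct change of variables $x \mapsto x+v$. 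Once these two points are handled, combining the two semicontinuity inequalities gives $g(\theta_k) \to g(\theta)$, completing the proof. $\blacksquare$
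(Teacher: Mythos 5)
Your overall strategy (glue a near-optimal transport plan realizing $d(\theta_k,\theta)$ with a near-optimal admissible plan, then estimate the cost of the composition using $|D_xL|\le C$, $|D^2_{xx}L|\le C$ and the quadratic transport cost via $ab\le \epsilon a^2+b^2/\epsilon$) is exactly the paper's, and your weak-compactness/lower-semicontinuity argument for the $\liminf$ direction is sound and arguably cleaner than the paper's ``the other inequality in the same way.'' But there is a genuine gap in the gluing step as you describe it. You glue only two legs: $\sigma_k$ transporting $\theta_k$ to $\theta$, followed by the plan $\mu=\theta\,\pi$, whose endpoint marginal is $\theta$ (that is what holonomy for $\theta$ means). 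The composite displacement therefore sends $\theta_k$ to $\theta$, not back to $\theta_k$, so the resulting measure is not in $\Pi[\theta_k,\cdot]$ at all: the holonomy constraint is violated exactly, not approximately, and this is not the ``bookkeeping matter'' you dismiss it as. The paper's construction uses \emph{three} legs --- transport via $\nu_{0,k}$, apply the optimal plan $\pi_k$ at the transported point, transport back via $\nu_{k,0}$ --- encoded in the measure $\Xi$ on $\Tt^n\times(\Rr^n)^3$ and projected by $(x,v_1,v_2,v_3)\mapsto (x,v_1+v_2+v_3)$; only with the return leg does the endpoint marginal come back to the starting measure.

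The return leg also resolves the rotation-vector drift you correctly identify, with no separate continuity-in-$Q$ argument: the paper proves the small lemma $\int v\,d\mu_{0,k}=-\int v\,d\mu_{k,0}$ (the optimal backward transport is the pushforward of the forward one under $(x,v)\mapsto(x+v,-v)$), so the $v_1$ and $v_3$ contributions to $\int v$ cancel exactly and the glued plan has rotation vector exactly $Q$. Your proposed repair --- uniform local Lipschitz dependence of $g_Q(\theta_k)$ on $Q$, or an explicit mass-shifting perturbation --- is plausible (convexity of $g_Q$ in $Q$ together with the quadratic upper bound would give local Lipschitz continuity), but you leave it unproven, and it becomes unnecessary once the third leg is in place. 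In short: right idea and right estimates, but the two-leg construction as written does not produce admissible competitors; adding the return transport and the cancellation lemma repairs both the holonomy and the rotation defects simultaneously.
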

\begin{proof}
Let $\pi_k$ and $\pi$ be optimal measures for $g(\theta_k)$ and
$g(\theta)$ respectively. For each $k$, disintegrate the optimal measures
used in the computation of
 $W(\theta_k,\theta)$ and $W(\theta,\theta_k)$ which will be respectively denoted as
\[
\mu_{0,k}=\nu_{0,k} \theta\ \text {and}\
\mu_{k,0}=\nu_{k,0}\theta_k.
\]
Consider the probability $\Xi$  on $\Tt^n\times (\Rr^n)^3$ given by
$$d\, \Xi(x, v_1,v_2,v_3)=d\, \nu_{k,0}(x+ v_1 + v_2,v_3)\pi_k(x+v_1,v_2)\nu_{0,k}(x,v_1)\theta (x).$$
Define the probability $\pi^*\theta$ on $\Tt^n\times \Rr^n$ as
\[
\int_{            \Tt^n\times \Rr^n  }\phi(x,v)\ d(\pi^*\theta) (x,v)=
\int_{\Tt\times (\Rr^n)^3} \phi(x,v_1+v_2+v_3)\ d\, \Xi(x,v_1,v_2,v_3).
\]
Then $\theta$ is the marginal of $ \pi^*\theta $ in the second coordinate. Indeed,
\begin{align*}
&\int_{\Omega}\phi(x+v)\ d(\pi^*\theta)(x,v)=\\
&\int_{\Tt^n\times (\Rr^n)^3}\phi(x+v_1+v_2+v_3) d\, \nu_{k,0}(x+ v_1 + v_2,v_3)\pi_k(x+v_1,v_2)\nu_{0,k}(x,v_1)\theta (x)=\\
&\int_{\Tt^n\times (\Rr^n)^2} \phi(y+v_2+v_3)       d\, \nu_{k,0}(y + v_2,v_3)\pi_k(y,v_2)            \theta_k(y)=\\
&\int_{\Tt\times (\Rr^n)} \phi(z+v_3)      \nu_{k,0}(z,v_3)\theta_k(z)=\int_{\Tt^n} \phi(w)\ \theta(w) =\int_{\Tt^n} \phi(x)
\theta(x).
\end{align*}
Note that $\theta_k$ is the marginal of $\mu_{0,k}$ in the first coordinate and $\theta$ is the marginal of $\mu_{0,k}$ in the second coordinate. Moreover, $\theta$ is the marginal of $\mu_{k,0}$ in the first coordinate and $\theta_k$ is the marginal of $\mu_{k,0}$ in the second coordinate.

Now we need the following lemma.

\begin{lemma}
\[
\int v\ d\mu_{0,k}= -\int v\ d\mu_{k,0},
\]
\end{lemma}
\begin{proof}
Let $\mu\in \Pi[\theta_1,\theta_2]$ be an optimal measure, which we assume first that it is absolutely continuous,  transporting $\theta_1$ to $\theta_2$
with density, that is, $\mu=F(x,y)dx dv$.
Assume that $\theta_1=f(x)dx$ and $\theta_2=g(y)dy$. Then the measure
$\bar \mu:F(x+v,-v)dx dv$ belongs to $\Pi[\theta_2,\theta_1]$
and
\[
\int {|v|^2\over 2}\ d\mu=\int {|v|^2\over 2}\ d\bar \mu.
\]
Since $W(\theta_1, \theta_2)=W(\theta_2,\theta_1)$, $\bar \mu$ is a
minimal measure transporting $\theta_2$ to $\theta_1$.
Moreover, by simple computation
\[
\int v\ d\mu=-\int v\ d\bar \mu.
\]
If $\mu$ is not absolutely continuous we can consider the transformation
$G: \Tt^n \times \Rr^n \to  \Tt^n \times \Rr^n  $ given by $G(x,v)= (x+v,-v)$ (of course, $x+v$ is considered mod 1). Now, we can consider
$\bar \mu = G^* (\mu)$ and use a similar reasoning as before.
\end{proof}

Note that proceeding in the same way as before, we get
$$
\int_{\Tt^n\times \Rr^n}v\ d(\pi^*\theta)=$$
$$
\int_{\Tt^n}\,(v_1+v_2+v_3)\, d\, \nu_{k,0}(x+ v_1 + v_2,v_3)\pi_k(x+v_1,v_2)\nu_{0,k}(x,v_1)\theta (x)=
 Q.$$
Therefore, $\pi^*\in \Pi[\theta, Q]$ and we get finally that
$$\int_{\Tt^n\times \Rr^n} L(x,v)\ d(\pi \theta)\leq \int_{\Tt^n\times \Rr^n} L(x,v)\ d(\pi^* \theta).$$

Now, using the inequality $a\,b\leq \epsilon a^2 + \frac{b^2}{\epsilon}, \,\forall a,b,\epsilon >0,$ and  by Taylor's formula we get for small fixed $\epsilon$
$$
\int_{\Tt^n\times \Rr^n} L(x,v)\ d(\pi^* \theta) =
\int_{\Tt^n\times (\Rr^n)^3} L(x,v_1+v_2+v_3)\ d\,\Xi$$
$$
=\int \,[L(x+v_1,v_2)+L(x,v_1+v_2+v_3)-L(x+v_1,v_2)\,]\ d\Xi=$$
$$
=\int \,L(x+v_1,v_2)+[L(x,v_1+v_2+v_3)- L(x+v_1,v_1+v_2+v_3)] -$$
$$[L(x+v_1,v_2)\, - L(x+v_1,v_1+v_2+v_3)]\ d\,\Xi=$$
$$\int_{\Tt^n\times \Rr^n}L(x,v_2)\, \pi_k(x,dv_2)\,\theta_k(dx)$$
$$+\int \sqrt{\epsilon} L_x(x,v_1 + v_2 + v_3)\cdot (\frac{v_1}{\sqrt{\epsilon}  })\, +O(|v_1|^2)\ d\,\Xi$$
$$-\int L_v(x+v_1,v_2)\cdot (v_1 + v_3)+O(|v_1 + v_3|^2)\ d\,\Xi \leq $$
$$g(\theta_k)+C \,\epsilon + C_\epsilon\,\int_{\Tt^n\times \Rr^n}
[\,{|v_1|^2\over 2}+ {|v_3|^2\over 2}\,]\ d\, \Xi=$$
$$
g(\theta_k)+C\, \epsilon+C_\epsilon \, [W(\theta,\theta_k)+W(\theta_k.\theta)]$$
where $C$ is constant and $C_{\epsilon}$ is a constant which depends on  $\epsilon$.

Given $\delta$ we can choose $\epsilon $, and then $k$, such that  $C \epsilon <\delta/2$ and $ C_\epsilon \, [W(\theta,\theta_k)+W(\theta_k.\theta)]<\delta/2$. Taking $k\to \infty$ we get
\[
g(\theta)\leq \lim_{k\rightarrow \infty}g(\theta_k)+C\epsilon.
\]
As $\epsilon$ goes to zero,
\[
g(\theta)\leq\lim_{k\rightarrow \infty}
g(\theta_k).
\]
We can prove the other inequality in the same way, so $g$ is continuous
with respect to the Wasserstein metric. From this follows the existence of the minimax measure.
\end{proof}

\begin{proposition}
The function $g$ is convex in $\theta$. Furthermore, the function $g_Q(\theta)$ is convex in $Q$ and $\theta$.
\end{proposition}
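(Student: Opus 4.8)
The plan is to pass from the pair $(\theta,\pi)$ to the single joint measure $\mu=\theta\,\pi$ on the tangent bundle $\Omega=\Tt^n\times\Rr^n$, under which every ingredient of the problem becomes affine. Recall that
\[
g_Q(\theta)=\min\Big\{\int_\Omega L\,d\mu\ :\ (\pi_1)_*\mu=\theta,\ \int_\Omega[\phi(x+v)-\phi(x)]\,d\mu=0\ \ \forall\phi\in C(\Tt^n),\ \int_\Omega v\,d\mu=Q\Big\},
\]
and that $g(\theta)$ is the same infimum with the last (homological) constraint deleted. Write $\cA(\theta,Q)$ for the set of admissible $\mu$, and similarly $\cA(\theta)$ for the set of $\mu$ satisfying only the first two conditions; if such a set is empty we put the corresponding value equal to $+\infty$, in which case the inequalities below hold trivially, so assume these sets are nonempty.

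First I would prove the joint convexity of $g_Q$. Fix $t\in[0,1]$, probabilities $\theta_1,\theta_2\in P(\Tt^n)$ and vectors $Q_1,Q_2\in\Rr^n$, and set $\theta=t\theta_1+(1-t)\theta_2$, $Q=tQ_1+(1-t)Q_2$. Choose optimal $\mu_i\in\cA(\theta_i,Q_i)$ and put $\mu=t\mu_1+(1-t)\mu_2$. Each of the three conditions defining $\cA$ is affine in $\mu$: $(\pi_1)_*\mu=t\theta_1+(1-t)\theta_2=\theta$; the holonomic identity, being linear in $\mu$, is preserved under convex combinations; and $\int_\Omega v\,d\mu=tQ_1+(1-t)Q_2=Q$. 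Hence $\mu\in\cA(\theta,Q)$, and since $\mu\mapsto\int_\Omega L\,d\mu$ is linear,
\[
g_Q(\theta)\le\int_\Omega L\,d\mu=t\int_\Omega L\,d\mu_1+(1-t)\int_\Omega L\,d\mu_2=t\,g_{Q_1}(\theta_1)+(1-t)\,g_{Q_2}(\theta_2).
\]
This is precisely the joint convexity of $(\theta,Q)\mapsto g_Q(\theta)$; specializing to $Q_1=Q_2=Q$ yields convexity of $g_Q$ in $\theta$ for each fixed $Q$. The convexity of $g$ in $\theta$ is the identical computation carried out with $\cA(\theta)$ in place of $\cA(\theta,Q)$, the point again being that the holonomic constraint is affine, so $t\mu_1+(1-t)\mu_2$ is admissible for $g$ at $\theta=t\theta_1+(1-t)\theta_2$ whenever $\mu_i$ is admissible at $\theta_i$.

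There is essentially no obstacle in this argument: the only point deserving a word of care is that the infima defining $g$ and $g_Q$ are actually attained, so that one may speak of optimal $\mu_i$. This follows from the coercivity of $L$ and the weak-compactness/continuity arguments already established in the preceding theorem; alternatively one may simply work with $\varepsilon$-minimizers $\mu_i$ and let $\varepsilon\to 0$ at the end, which avoids any appeal to attainment altogether.
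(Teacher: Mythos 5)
Your proof is correct and is essentially the same as the paper's: the paper likewise forms the convex combination of the joint measures, $\theta_\lambda\pi_\lambda=(1-\lambda)\theta_0\pi_0+\lambda\theta_1\pi_1$, observes it is admissible for $(\theta_\lambda,Q_\lambda)$ because the constraints are affine, and then takes the infimum over the plans $\pi_i$ (which is your $\varepsilon$-minimizer variant, so attainment is not needed). The only difference is presentational: you make the reduction to a single linear program in $\mu$ explicit, while the paper keeps the $(\theta,\pi)$ notation.
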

\begin{proof}
Let $\theta_i$, and $Q_i$, $i=0, 1$, be, respectively, probability measures on $\Tt^n$ and rotation vectors on $\Rr^n$.
Let $\pi_i\in \Pi(\theta_i, Q_i)$. For $0\leq \lambda\leq 1$ define
$\theta_\lambda=(1-\lambda) \theta_0+\lambda \theta_1$, and $Q_\lambda=(1-\lambda) Q_0+\lambda Q_1$.
Let
$\pi_\lambda$ to be the plan in $\Pi(\theta_\lambda, Q_\lambda)$ such that
\[
\theta_\lambda \pi_\lambda=(1-\lambda) \theta_0\pi_0+\lambda \theta_1\pi_1.
\]
Then
\[
g(\theta_\lambda)\leq \int L \theta_\lambda \pi_\lambda=(1-\lambda) \int L \theta_0\pi_0+\lambda \int L \theta_1\pi_1.
\]
By taking the infimum over all plans $\pi_i$ we obtain
\[
g(\theta_\lambda)\leq (1-\lambda) g(\theta_0)+\lambda g(\theta_1).
\]
\end{proof}

\begin{proposition}
$\hat L$ is convex in $Q$.
\end{proposition}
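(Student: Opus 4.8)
The plan is to read off convexity of $\hat L$ directly from the preceding proposition, using the elementary fact that a pointwise supremum of convex functions is convex. Recall that $\hat L(Q)=\sup_\theta g_Q(\theta)$, the supremum being over all probability measures $\theta$ on $\Tt^n$. Fix $Q_0,Q_1\in\Rr^n$ and $\lambda\in[0,1]$, and set $Q_\lambda=(1-\lambda)Q_0+\lambda Q_1$. Specializing the joint convexity of $(Q,\theta)\mapsto g_Q(\theta)$ just proved, by taking $\theta_0=\theta_1=\theta$ (so that $\theta_\lambda=\theta$), one gets, for every probability measure $\theta$ on $\Tt^n$,
\[
g_{Q_\lambda}(\theta)\ \le\ (1-\lambda)\,g_{Q_0}(\theta)+\lambda\,g_{Q_1}(\theta)\ \le\ (1-\lambda)\,\hat L(Q_0)+\lambda\,\hat L(Q_1).
\]
Taking the supremum over $\theta$ on the left-hand side gives $\hat L(Q_\lambda)\le(1-\lambda)\hat L(Q_0)+\lambda\hat L(Q_1)$, which is exactly the asserted convexity.

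Two small points deserve a word of care. First, the interpolating plan $\pi_\lambda$ must lie in $\Pi(\theta,Q_\lambda)$: this is precisely the verification carried out in the proof of the previous proposition, since the constraints \eqref{c1} and \eqref{c2} defining $\Pi[\theta,Q]$ are linear in $\pi$ and hence stable under convex combinations. Second, for the inequality to be meaningful one wants $\hat L$ to take finite values, which follows from item (3) of the Proposition in Section~2 under the stated quadratic bounds on $L$; and if for some $\theta$ the admissible class $\Pi(\theta,Q)$ happened to be empty then $g_Q(\theta)=+\infty$ and the displayed inequality still holds with the usual conventions (this case does not in fact arise here).

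Alternatively, one could invoke the dual representation $\hat L(Q)=\sup_P\{-P\cdot Q-\hat H(P)\}$ from the formal computations — justified rigorously by the duality theory developed later — which exhibits $\hat L$ as a supremum of affine functions of $Q$, hence convex and lower semicontinuous. I do not expect a genuine obstacle in either route: the statement is a soft corollary of the convexity of $g_Q$ in its two arguments, with all the real work already absorbed into the preceding proposition.
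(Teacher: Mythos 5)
Your proposal is correct and matches the paper's argument: the paper likewise observes that $\hat L(Q)=\sup_\theta g_Q(\theta)$ is a supremum of the convex functions $Q\mapsto g_Q(\theta)$, whose convexity is supplied by the preceding proposition. You merely spell out the two-line verification that the paper leaves implicit.
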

\begin{proof}
It suffices to observe that $\hat L(Q)$ is the supremum of a family of convex functions of $Q$, namely $g_Q(\theta)$.
\end{proof}

\begin{proposition}
There is a maximizer $\theta^*$ of $g(\theta)$ which is point mass, i.e.  $\theta^*=\delta_{x^*}$, for
some $x^*$ in $\Tt^n$.
\end{proposition}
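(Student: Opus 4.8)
The plan is to combine the convexity of $g$ just established with the compactness of $P(\Tt^n)$ and the fact that the extreme points of $P(\Tt^n)$ are exactly the Dirac masses $\{\delta_x:x\in\Tt^n\}$; this is a Krein--Milman / Bauer-type argument, carried out in the weak-$*$ topology, which on the compact torus coincides with the topology induced by the metric $d$.

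First I would record the topological preliminaries. The set $(P(\Tt^n),d)$ is compact, and $d$ metrizes weak-$*$ convergence (it is a Wasserstein-type metric on the compact space $\Tt^n$); moreover $g$ is $d$-continuous by the continuity theorem proved above. I would also note that the map $x\mapsto\delta_x$ is continuous from $\Tt^n$ into $(P(\Tt^n),d)$: using the deterministic plan $\mu=\delta_{(x,v_0)}$ with $v_0$ a shortest representative of $y-x$ in $\Rr^n$, one has $\mu\in\Pi[\delta_x,\delta_y]$ and hence $d(\delta_x,\delta_y)\le \tfrac12|v_0|^2$. Consequently $x\mapsto g(\delta_x)$ is continuous on the compact set $\Tt^n$ and attains its maximum at some $x^*\in\Tt^n$.

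Next I would show $g(\delta_{x^*})=\sup_\theta g(\theta)=\hat L(Q)$. One inequality is immediate since $\delta_{x^*}\in P(\Tt^n)$. For the reverse, fix an arbitrary $\theta\in P(\Tt^n)$ and write $\theta=\int_{\Tt^n}\delta_x\,d\theta(x)$, i.e.\ $\theta$ is the barycenter of the pushforward of $\theta$ under $x\mapsto\delta_x$. Since $g$ is convex and continuous on the compact convex set $P(\Tt^n)$, which sits inside the locally convex space $C(\Tt^n)^*$, Jensen's inequality for barycenters gives
\[
g(\theta)\ \le\ \int_{\Tt^n} g(\delta_x)\,d\theta(x)\ \le\ \max_{x\in\Tt^n} g(\delta_x)\ =\ g(\delta_{x^*}).
\]
Taking the supremum over $\theta\in P(\Tt^n)$ yields $\hat L(Q)\le g(\delta_{x^*})$, hence equality, so $\theta^*:=\delta_{x^*}$ is a point-mass maximizer. (Equivalently one may invoke Bauer's maximum principle directly: a convex, upper semicontinuous function on a compact convex subset of a locally convex space attains its maximum at an extreme point, and the extreme points of $P(\Tt^n)$ are the Dirac masses.)

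The main obstacle is the bookkeeping behind the Jensen/Bauer step: one must make sure $g$ is genuinely upper semicontinuous for the weak-$*$ topology on $P(\Tt^n)$ and that this topology is the same as the one induced by $d$. Both points are routine here — the latter is standard on the compact torus, and the former is exactly the content of the continuity theorem already proved — but they should be stated explicitly since the whole argument rests on them.
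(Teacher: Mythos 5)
Your argument is correct, but it is not the route the paper takes. The paper starts from an arbitrary maximizer $\theta^0$ (whose existence comes from the continuity theorem plus compactness), identifies $\Tt^n$ with a unit cube, and runs a dyadic bisection: at each stage the supporting cube is split into $2^n$ subcubes, $\theta^k$ is written as the convex combination $\sum_j\lambda_j\theta^k_j$ of its normalized restrictions, and convexity of $g$ forces at least one $\theta^k_j$ with $\lambda_j>0$ to again be maximizing; the resulting nested sequence converges in the metric $d$ to a Dirac mass, which is maximizing by continuity. You instead invoke the Bauer maximum principle / barycentric Jensen inequality on the compact convex set $P(\Tt^n)$, whose extreme points are the Dirac masses. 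Both proofs rest on exactly the same two ingredients (convexity and $d$-continuity of $g$), but they package them differently: the paper's subdivision argument is elementary and needs convexity only for finite convex combinations plus continuity along one convergent sequence, at the cost of some bookkeeping; your argument is shorter and more conceptual, and it yields the stronger, cleaner identity $\sup_\theta g(\theta)=\max_{x}g(\delta_x)$ together with the existence of the maximizer directly from compactness of $\Tt^n$ and continuity of $x\mapsto g(\delta_x)$, rather than quoting a separately established existence result. Two points you should make explicit if you write this up: (i) the barycentric Jensen inequality $g\bigl(\int\delta_x\,d\theta\bigr)\le\int g(\delta_x)\,d\theta$ requires $g$ to be \emph{lower} semicontinuous (so that it is a supremum of weak-$*$ continuous affine minorants), while Bauer's principle requires \emph{upper} semicontinuity --- the continuity theorem supplies both, but your sketch emphasizes only the latter; (ii) the identification of $d$-convergence with weak-$*$ convergence on $P(\Tt^n)$, which you correctly flag, deserves a line of justification since the paper's $d$ is a squared-Wasserstein-type quantity built from plans on $\Tt^n\times\Rr^n$ rather than the standard $W_2$ on the quotient.
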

\begin{proof}
Let $\theta^0$ be a maximizer of $g$. The support of $\theta^0$ is contained in $\Tt^n$ which we identify with a cube of side $1$.
We will construct inductively a sequence of maximizing probability measures $\theta^k$ supported in a cube of side $2^{-k}$. Therefore,
these measures will converge in the Wasserstein distance to a measure $\theta^*$ which is supported in a single point and is maximizing
by continuity.

Suppose $\theta^k$ is given and is supported in a cube of side $2^{-k}$. Divide this cube into $2^n$ identical disjoint
cubes with half the sidelenght. Let $\{Q_j\}$ denote the collection of cubes.
Either the restriction to $Q_j$ of $\theta^k$ is zero, in which case we set $\lambda_j=0$,
or
\[
\int_{Q_j} \theta^k=\lambda_j>0.
\]
in which case we set $\theta^k_j=\frac 1 \lambda_j \theta^k 1_{Q_j}$. Note that $\sum \lambda_j=1$, and that each $\theta^k_j$ is a probability measure.
We have $\theta^k=\sum_j \lambda_j \theta^k_j$.
By convexity
\[
g(\theta^k)\leq  \sum_j \lambda_j g(\theta^k_j),
\]
which implies that for every index $j$ for which $\lambda_j>0$, we have that $\theta^k_j$ is a maximizing probability measure. Set $\theta^{k+1}=\theta^k_j$ for
one of those indices. Proceeding inductively we get convergence to a certain $x^*$.

The final conclusion is that there is always point masses which are maximizers.

\end{proof}

If $g_Q$ was strictly convex, then such probability would be unique. In the proof of  Proposition 7.2 and 7.3 we will address this question.

\section{Duality - discrete time}
\label{dualsec}

In this section we proceed in a similar way to \cite{Gom2}.
Fix a probability measure $\theta>0$ and a rotation vector $Q\in \Rr^n$. We will establish that
\begin{align*}
&\inf_{\pi\in \Pi(\theta,Q)}\int  L(x,v)\,d(\pi\,\theta)\,=\\
&\sup_{\phi,P}\int_{\Tt^n} \inf_v
[ L(x,v)+\phi(x+v)-\phi(x)+P\cdot (v-Q)]\,d\theta.
\end{align*}
Set
\[
C^0_*=\left\{\phi\in C(\Omega) : \lim_{|v|\to\infty}
\frac {\phi(x,v)}{|v|}=0\right\},
\]
and observe that the dual of $C^0_*$ is the space $\Mm$
of Radon measures $\mu$ in $\Omega$ with
\[
\int_\Omega |v| d|\mu|<\infty.
\]
Define
\[
h_1(\phi)=\int_{\Tt^n}\sup_{v} \left[-\phi(x,v)-L(x,v)\right]
d\theta.
\]
Let
$$
\Cg=\{  \phi\in C(\Omega) : \phi(x,v)=\psi(x+v)-\psi(x)+P\cdot
(v-Q) \, \text{for some\,} \, \psi\in C(\Tt^n)$$
$$ \text{and for some\,}
P\in \Rr^n \}.
$$
Define
$
h_2(\phi)\, = \,0 $, if $ \phi\in \Cg$, and set $h_2(\phi) \,=\, -\infty $, otherwise.
Let
$$
\Mm_0=\{\mu\in \Mm: \int_\Omega \psi(x+v)-\psi(x) d\mu=0, \forall
\psi\in C(\Tt^n)\ \text{and}$$
$$
\int_{\Omega} P\cdot (v-Q) d\mu=0,\ \forall P\in \Rr^n \}.
$$
Note that the second constraint in the definition of $\Mm_0$ is simply
the rotation vector constraint
\[
\int v d\mu=Q.
\]
Define also
\[
\Mm_1=\left\{\mu\in \Mm: \mu\,=\,\pi\,\theta,\,\pi \ \text{is a non-negative plan, and}\ \int_\Omega d\mu=1\right\}.
\]
As explained before, the constraint that $\pi$ is a non-negative plan simply means that $\pi\theta$ is a (non-negative) probability
measure such that
\[
\int \varphi(x) d (\pi\theta)=\int \varphi(x) d\theta.
\]
\begin{proposition}
\[
h_1^*(\mu)=
\begin{cases}
\int  L d\mu & \text{if}\ \mu\in\Mm_1\\
+\infty & \text{otherwise},
\end{cases}
\]
and
\[
h_2^*(\mu)=
\begin{cases}
0 &\text{if}\ \mu\in\Mm_0\\
-\infty &\text{otherwise}.
\end{cases}
\]
\end{proposition}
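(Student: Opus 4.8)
The plan is to compute each of the two conjugates directly from its definition, in both cases splitting the argument into the sub-case where $\mu$ lies in the indicated set (where one verifies the stated value) and the sub-case where it does not (where one produces a one-parameter family of test functions $\phi$ driving the expression to $+\infty$, resp. $-\infty$).

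\emph{Computation of $h_1^*$.} Suppose first $\mu=\pi\theta\in\Mm_1$; then $\mu\ge 0$, its marginal on the $x$-variable is $\theta$, $\int d\mu=1$, and $\int|v|\,d\mu<\infty$, so every $\phi\in C^0_*$ is $\mu$-integrable (since $\phi(x,v)/|v|\to 0$) and $\int L\,d\mu$ is well defined in $(-\infty,+\infty]$ ($L$ being bounded below by coercivity). For $h_1^*(\mu)\le\int L\,d\mu$ I would use that for each $x$ the $\pi(x,\cdot)$-average is dominated by the supremum, $\sup_v[-\phi(x,v)-L(x,v)]\ge\int[-\phi(x,v)-L(x,v)]\,\pi(x,dv)$; integrating this in $\theta$ and using the disintegration $\mu=\pi\theta$ gives $h_1(\phi)\ge-\int\phi\,d\mu-\int L\,d\mu$, i.e. $-\int\phi\,d\mu-h_1(\phi)\le\int L\,d\mu$ for every $\phi$, so the supremum defining $h_1^*(\mu)$ is $\le\int L\,d\mu$. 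For the reverse inequality I would test against $\phi_R(x,v):=-L(x,v)\,\chi_R(v)$, with $\chi_R$ a smooth cutoff equal to $1$ on $\{|v|\le R\}$ and to $0$ on $\{|v|\ge 2R\}$: this $\phi_R$ has compact support in $v$, hence lies in $C^0_*$; by coercivity of $L$ and compactness of $\Tt^n$, for $R$ large one has $-\phi_R-L=L\,(\chi_R-1)\le 0$ with equality on $\{|v|\le R\}$, so $\sup_v[-\phi_R-L]\equiv 0$ and its $\theta$-integral vanishes, while $-\int\phi_R\,d\mu=\int L\,\chi_R\,d\mu\to\int L\,d\mu$ by monotone convergence (applied to $L+C\ge 0$), including the case where the limit is $+\infty$. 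Hence $h_1^*(\mu)=\int L\,d\mu$.

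\emph{The remaining cases for $h_1^*$, and $h_2^*$.} If $\mu\notin\Mm_1$, then $\mu$ fails to be a nonnegative measure whose $x$-marginal is $\theta$. If $\mu$ is not nonnegative, choose $\phi\ge 0$ in $C^0_*$ with $\int\phi\,d\mu<0$ (standard) and test against $t\phi$: since $-t\phi\le 0$ the supremum $\sup_v[-t\phi-L]\le\sup_v[-L]$ stays bounded above uniformly in $x$, so $-t\int\phi\,d\mu-h_1(t\phi)\to+\infty$. If $\mu\ge 0$ but its $x$-marginal differs from $\theta$, pick $\varphi\in C(\Tt^n)$ with $\int\varphi(x)\,d\mu\ne\int\varphi\,d\theta$ and test against $\phi(x,v)=t\varphi(x)\in C^0_*$; the expression then equals $t\bigl(\int\varphi\,d\theta-\int\varphi\,d\mu\bigr)$ plus a constant, hence is unbounded as $t\to\pm\infty$. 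Thus $h_1^*(\mu)=+\infty$ off $\Mm_1$. For $h_2^*$, the content is merely that $\Cg$ is a linear subspace of $C^0_*$ and $h_2$ is its concave indicator, so $h_2^*$ is the concave indicator of the annihilator $\{\mu\in\Mm:\int\phi\,d\mu=0\ \forall\phi\in\Cg\}$; and a measure annihilates $\Cg$ iff, taking $P=0$, $\int[\psi(x+v)-\psi(x)]\,d\mu=0$ for all $\psi\in C(\Tt^n)$ and, taking $\psi=0$, $\int P\cdot(v-Q)\,d\mu=0$ for all $P\in\Rr^n$ — precisely the two defining conditions of $\Mm_0$. Hence $h_2^*(\mu)=0$ on $\Mm_0$ and $-\infty$ otherwise.

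The one genuinely delicate point is the lower bound $h_1^*(\mu)\ge\int L\,d\mu$ for $\mu\in\Mm_1$: because $L$ grows superlinearly it does not belong to $C^0_*$, so $\phi=-L$ is not an admissible competitor, and one must truncate as above and then control the limit — checking both that the truncated supremum stays identically $0$ and that $\int L\,\chi_R\,d\mu\to\int L\,d\mu$, including the case $\int L\,d\mu=+\infty$. The remaining assertions reduce to the choice of the right test functions and to the elementary duality of linear subspaces, and are routine.
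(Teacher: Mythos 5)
Your proof is correct and follows essentially the same route as the paper: a direct computation of both conjugates, with the upper bound $h_1^*(\mu)\le\int L\,d\mu$ obtained from the pointwise inequality between the fiberwise average and the supremum, the lower bound obtained by truncating $L$ (you use a cutoff $-L\chi_R$ where the paper uses an increasing sequence $L_n\uparrow L$ in $C^0_*$, both concluding by monotone convergence), linear scaling of suitable test functions to produce $+\infty$ (resp.\ $-\infty$) off $\Mm_1$ (resp.\ $\Mm_0$), and the identification of $\Mm_0$ as the annihilator of the subspace $\Cg$. The only difference is organizational — the paper packages the truncation and the marginal test into a single lemma, while you treat the cases separately — and this does not affect the substance.
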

\begin{proof}
Recall that
\[
h_1^*(\mu)=\sup_{\phi\in C^0_*}\left[
-\int_\Omega \phi d\mu-h_1(\phi)\right].
\]
If $\mu$ is non-positive then we can choose a sequence of non-negative
functions
$\phi_n\in C^0_*$ such that
\[
-\int_\Omega \phi_n d\mu\to +\infty.
\]
Since $L\geq 0$ we have
\[
h_1(\phi_n)=\int _{\Tt^n}\sup\left[-\phi_n- L\right]d\theta \leq 0.
\]
Therefore $h_1^*(\mu)=+\infty$.
\begin{lemma}
If $\mu\geq 0$ then
\[
h_1^*(\mu)\geq \int_\Omega Ld\mu+\sup_{\psi\in C^0_*}
\left[\int_\Omega \psi d\mu-\int_{\Tt^n}\sup_{v}\psi \,d\theta\right].
\]
\end{lemma}
\begin{proof}  Let $L_n$ be a sequence in $C^0_*$ that increases pointwise
to $L$, $0\leq L_n\uparrow L$.
Any function $\phi\in C^0_*$ can be written as $\phi=-L_n-\psi$ for
some $\psi\in C^0_*$. Therefore
\[
\sup_{\phi\in C^0_*}\left[-\int_\Omega \phi d\mu-h_1(\phi)\right]
=
\sup_{\psi\in C^0_*}\left[\int_\Omega (L_n+\psi) d\mu
-h_1(-L_n-\psi) \right].
\]
Since $L_n-L\leq 0$ we have $\sup(-L+L_n+\psi)\leq \sup \psi$, and so
\[
h_1(-L_n-\psi)\leq \int_{\Tt^n}\sup_v\psi\,d\theta.
\]
Thus
\[
\sup_{\phi\in C^0_*}\left[-\int_\Omega \phi d\mu-h_1(\phi)\right]
\geq \int_\Omega L_nd\mu +
\sup_{\psi\in C^0_*}\left[\int_\Omega \psi d\mu -\int_{\Tt^n}\sup_v\psi \,d\theta\right].
\]
Letting $n\to\infty$, and using the monotone convergence theorem we prove the lemma.
\end{proof}

Now suppose $\int_\Omega d\mu\neq 1$. Then by choosing $\psi=\alpha\in \Rr$
we get
\[
\sup_{\psi\in C^0_*}\left[\int_\Omega \psi d\mu -\int_{\Tt^n}\sup_v\psi \,d\theta \right]
\geq \sup_{\alpha\in \Rr}
\alpha\left(\int_\Omega d\mu-1\right)=+\infty.
\]

If otherwise $\int_{\Omega}d\mu=1$ we have
\[
\int_\Omega (-\phi-L)d\mu\leq \int_{\Tt^n}\sup_v (-\phi-L)\,d\theta=h_1(\phi),
\]
if $\mu=\pi\theta$.
Therefore, for any $\phi$
\[
-\int_\Omega \phi d\mu-h_1(\phi)\leq \int_\Omega Ld\mu,
\]
and so
\[
h_1^*(\mu)\leq \int_\Omega Ld\mu.
\]
If $\int_{\Omega}d\mu=1$ but $\mu\neq\pi\theta$, there exists $\phi_0\in C(\Tt^n)$
such that
\[
\int_{\Omega}\phi_0(x)d\mu\neq\int_{\Tt^n}\phi_0(x)d\theta.
\]
Therefore,
\[
\sup_{\psi\in C^0_*}\left[\int_\Omega \psi d\mu -\int_{\Tt^n}\sup_v\psi \,d\theta \right]
\geq \sup_{\alpha\in \Rr}
\alpha\left(\int_\Omega\phi_0d\mu-\int_{\Tt^n}\phi_0 d\theta\right)
=+\infty.
\]

To compute $h_2^*$ observe that if $\mu\not\in \Mm_0$
then there exists $\psi\in C(\Tt^n)$ such that
\[
\int_\Omega \psi(x+v)-\psi(x) d\mu\neq 0,
\]
or there exists $P\in \Rr^n$ such that
\[
\int_{\Omega} P\cdot (v-Q) d\mu\neq 0.
\]
In any case we can choose a $\hat \phi\in \Cg$ such that
\[
\int_{\Omega}\hat \phi(x,v) d\mu\neq 0.
\]
Thus
\[
\inf_{\phi\in\Cg}-\int_\Omega \phi d\mu\leq \inf_{\alpha\in \Rr}
\int \alpha \hat\phi d\mu=-\infty.
\]
So for $\mu\not\in\Mm_0$ $h_2^*(\mu)=-\infty$. If $\mu\in \Mm_0$
then
\[
\int_\Omega \phi d\mu=0\qquad \forall \phi\in \Cg.
\]
Consequently $h_2^*(\mu)=0$.
\end{proof}

Therefore, in a similar way to \cite{Gom2} we get:

\begin{theorem}
\begin{equation}
\label{maxlq}
\hat L(Q)=\sup_{\theta}\sup_{P,u}
\int \inf_v[L(x,v)+u(x+v)-u(x)+P\cdot (v-Q)]\theta(x).
\end{equation}
\end{theorem}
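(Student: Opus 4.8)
The plan is to derive \eqref{maxlq} from the Fenchel--Rockafellar duality theorem applied to the pair of functionals $h_1$ and $h_2$ introduced above, exactly as in \cite{Gom2}. Recall that for convex functionals $h_1, h_2$ on a topological vector space $E$ (here $E=C^0_*$) with topological dual $E^*$ (here $E^*=\Mm$), under a suitable continuity/constraint-qualification hypothesis one has
\[
\inf_{\phi\in E}\bigl[h_1(\phi)-h_2(\phi)\bigr]
=\sup_{\mu\in E^*}\bigl[h_2^*(\mu)-h_1^*(\mu)\bigr].
\]
So first I would spell out the left-hand side: $h_1(\phi)-h_2(\phi)$ equals $\int_{\Tt^n}\sup_v[-\phi(x,v)-L(x,v)]\,d\theta$ when $\phi\in\Cg$, i.e. $\phi(x,v)=\psi(x+v)-\psi(x)+P\cdot(v-Q)$, and $+\infty$ otherwise. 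Substituting this particular form of $\phi$ and replacing $\psi$ by $-\psi$ (which runs over all of $C(\Tt^n)$), we get
\[
\inf_{\phi}\bigl[h_1(\phi)-h_2(\phi)\bigr]
=-\sup_{\psi,P}\int_{\Tt^n}\inf_v\bigl[L(x,v)+\psi(x+v)-\psi(x)+P\cdot(v-Q)\bigr]\,d\theta.
\]

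Next I would identify the right-hand side using Proposition (the one computing $h_1^*$ and $h_2^*$ just proved): $h_2^*(\mu)-h_1^*(\mu)$ is $-\infty$ unless $\mu\in\Mm_0\cap\Mm_1$, in which case it equals $-\int_\Omega L\,d\mu$. But $\Mm_0\cap\Mm_1$ is precisely the set of probability plans $\mu=\pi\theta$ with $\pi\in\Pi(\theta,Q)$: membership in $\Mm_1$ gives the holonomy constraint $\int\varphi(x)\,d(\pi\theta)=\int\varphi\,d\theta$ and that $\mu$ is a probability, while the second condition defining $\Mm_0$ gives $\int v\,d\mu=Q$. Hence
\[
\sup_{\mu}\bigl[h_2^*(\mu)-h_1^*(\mu)\bigr]
=-\inf_{\pi\in\Pi(\theta,Q)}\int_\Omega L\,d(\pi\theta).
\]
Equating the two sides and multiplying by $-1$ yields, for each fixed $\theta>0$,
\[
\inf_{\pi\in\Pi(\theta,Q)}\int L\,d(\pi\theta)
=\sup_{\psi,P}\int_{\Tt^n}\inf_v\bigl[L(x,v)+\psi(x+v)-\psi(x)+P\cdot(v-Q)\bigr]\,d\theta,
\]
which is exactly the duality identity announced at the start of the section. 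Finally, taking $\sup_\theta$ of both sides and using the definition $\hat L(Q)=\sup_\theta g_Q(\theta)=\sup_\theta\inf_{\pi\in\Pi(\theta,Q)}\int L\,d(\pi\theta)$ gives \eqref{maxlq}.

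The main obstacle is verifying the hypotheses of the Fenchel--Rockafellar theorem: one needs $h_1$ to be convex and continuous (or at least continuous at one point) at some $\phi$ where $h_2$ is finite, working in the correct topology on $C^0_*$ and checking that its dual really is $\Mm$ as claimed. Convexity of $h_1$ and $h_2$ is clear (a sup of affine functions, resp. an indicator of a linear subspace). The delicate points are the choice of the space $C^0_*$ with its norm $\|\phi\|=\sup_{(x,v)}|\phi(x,v)|/(1+|v|)$, the identification of its dual with measures $\mu$ satisfying $\int|v|\,d|\mu|<\infty$, the coercivity/superlinearity of $L$ which makes $\sup_v[-\phi(x,v)-L(x,v)]$ finite and the integral $h_1(\phi)$ well-defined, and checking a point of continuity of $h_1$ lying in $\Cg$ (e.g. $\phi\equiv$ a suitable constant, or $\psi\equiv 0$, $P=0$). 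All of this parallels the treatment in \cite{Gom2}, so I would set it up in that framework and then invoke the abstract duality theorem; the bookkeeping in matching $\Mm_0\cap\Mm_1$ with $\Pi(\theta,Q)$ and in the sign conventions for $\psi$ is routine but must be done carefully.
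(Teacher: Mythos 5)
Your proposal is correct and follows essentially the same route as the paper: the paper proves the Proposition computing $h_1^*$ and $h_2^*$ and then invokes Fenchel--Rockafellar duality "in a similar way to \cite{Gom2}" to obtain the fixed-$\theta$ identity announced at the start of the section, after which taking $\sup_\theta$ gives \eqref{maxlq}. Your bookkeeping (matching $\Mm_0\cap\Mm_1$ with $\Pi(\theta,Q)$, the sign conventions, and the constraint-qualification caveats) is exactly the content the paper delegates to \cite{Gom2}.
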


If we define $\Hh(w)=\sup_v \left(w(x)-w(x+v)-L(x,v)\right)$, we obtain
\[
\hat L(Q)=\sup_{P, u} \sup_x \,[\, -\,\Hh(u)\,-\,P\cdot Q\,].
\]

\section{The dual function and semi-convexity}

In this section, under the assumption that $L$ is quadratic in the velocity,  we consider some properties of the functions $u$ which attains supremum  in the
claim of last Theorem. These results imply, in particular, for each $\theta$ and $Q$, the existence of a maximizing function $u$ for
\eqref{maxlq}. We will ignore the rotation vector constraint $Q$ in order to simplify the notation.



Consider the mapping, the (generalized) double convexification of $u$ (see \cite{Vi} for the related double convexification in optimal transport):
\begin{equation}
\label{dc}
u\mapsto u^{dc}\equiv \sup_w\inf_v \left[
u(x+v-w)+L(x-w,v)-L(x-w,w)
\right].
\end{equation}
\begin{lemma}
Let $u$ be any function, then
\begin{equation}
\label{eqq1}
u^{dc}(x)\leq u(x),
\end{equation}
and
\begin{equation}
\label{eqq2}
\inf_z L(x, z)+u(x+z)=\inf_z L(x, z)+u^{dc}(x+z).
\end{equation}
\end{lemma}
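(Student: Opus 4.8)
The plan is to prove the two assertions separately, the inequality \eqref{eqq1} being the easy half and the identity \eqref{eqq2} requiring a bit more care with the double supremum/infimum. For \eqref{eqq1}, I would simply make a clever choice in the outer supremum of the definition \eqref{dc}. Taking $w=0$ inside $\sup_w$ gives the lower bound
\[
u^{dc}(x)\geq \inf_v\left[u(x+v)+L(x,v)-L(x,0)\right],
\]
which is the wrong direction, so instead I would argue for the upper bound directly: for an arbitrary $w$, bound the inner infimum over $v$ by the particular value $v=w$, obtaining $u(x+w-w)+L(x-w,w)-L(x-w,w)=u(x)$. Hence for every $w$ the quantity $\inf_v[\,\cdots\,]\leq u(x)$, and taking $\sup_w$ preserves the inequality, giving $u^{dc}(x)\leq u(x)$.

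For \eqref{eqq2}, the inequality ``$\leq$'' for the value $\inf_z L(x,z)+u^{dc}(x+z)\leq \inf_z L(x,z)+u(x+z)$ is immediate from \eqref{eqq1}. The substance is the reverse inequality. Here I would unwind $u^{dc}(x+z)$ using \eqref{dc} with the argument $x+z$, choosing in the outer $\sup_w$ the value $w=z$; this yields
\[
u^{dc}(x+z)\geq \inf_v\left[u(x+z+v-z)+L(x+z-z,v)-L(x+z-z,z)\right]
=\inf_v\left[u(x+v)+L(x,v)\right]-L(x,z).
\]
Therefore $L(x,z)+u^{dc}(x+z)\geq \inf_v[L(x,v)+u(x+v)]$ for every $z$, and taking $\inf_z$ on the left gives
\[
\inf_z L(x,z)+u^{dc}(x+z)\geq \inf_v L(x,v)+u(x+v),
\]
which is exactly the missing direction. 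Combining the two inequalities proves \eqref{eqq2}.

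The only point that needs attention is making sure the infima and suprema in \eqref{dc} are attained (or at least that the manipulations are legitimate as inequalities between extended-real numbers), so that substituting a particular value of $w$ inside $\sup_w$ genuinely gives a lower bound and substituting a particular $v$ inside $\inf_v$ genuinely gives an upper bound; this is automatic for inequalities and does not require attainment. Under the standing hypotheses — $L$ smooth, strictly convex and superlinear in $v$, with $u$ continuous and, by periodicity, bounded — the relevant extrema are in fact attained, but I would not need that for the argument. I expect no real obstacle here: the lemma is a bookkeeping identity about the double convexification, and the whole proof reduces to the two one-line substitutions $w=0$ (really $v=w$) and $w=z$ indicated above.
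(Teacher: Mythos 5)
Your proof is correct and follows essentially the same route as the paper: the bound $u^{dc}(x)\leq u(x)$ by testing the inner infimum at $v=w$, the ``$\leq$'' half of \eqref{eqq2} from that bound (the paper derives it by the same $v=w$ substitution inside the unwound expression), and the ``$\geq$'' half by choosing $w=z$ in the outer supremum. The brief detour through $w=0$ at the start is harmless since you discard it, and your closing remark that no attainment of extrema is needed is accurate.
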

\begin{proof} The inequality \eqref{eqq1}
  follows from choosing $v=w$ in (\ref{dc}). To prove the identity \eqref{eqq2}
observe that for fixed $x$
\begin{multline*}
\inf_z L(x, z)+u^{dc}(x+z)\\
=\inf_z\sup_w\inf_v \left[ L(x, z)+
u(x+z+v-w)\right.\\\left.+L(x+z-w,v)-L(x+z-w,w)
\right]\\
\leq \inf_z L(x, z)+u(x+z),
\end{multline*}
by choosing for each $w$, $v=w$. In a similar way, by choosing, for each $z$,
$w=z$ we obtain the opposite inequality.
\end{proof}


The result above shows that we can look  for maximizers in a smaller class as we can assume that
any maximizer is the double convexification of a function $u$. We apply this result to show in next proposition that
we can therefore take the maximizers
$u$ with a bounded semi-convexity constant, as long as $L$ satisfies suitable hypothesis.
Therefore, it will follow that there exists a maximizer $u$ with a bounded convexity modulus. In fact, for a fixed $\theta$ we consider a sequence $u_n$ such that almost realize
$$\sup_{u}
\int \inf_v[L(x,v)+u(x+v)-u(x)]\theta(x).$$
Then, we can extract a convergent subsequence it is clear that the limit $u$ of this sequence is a maximizer with bounded convexity modulus.

\begin{proposition} Suppose $L(x,v)$ satisfies, for any $x$, $\bar w$ and $y$,
\[
-L(x ,  \bar{w} +y     )-
L(x,  \bar{w} -y     )+2L(x,  \bar{w}      )\geq -C|y|^2.
\]
Then, for any periodic function $u:\Tt^n\to \Rr$, the function $u^{dc}$ is semiconvex, that is,
there exists a constant $C$ such that
\[
u^{dc}(x+y)+u^{dc}(x-y)-2 u^{dc}(x)\geq -C|y|^2.
\]
\end{proposition}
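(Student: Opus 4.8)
I would prove the semiconvexity estimate for $u^{dc}$ directly from the definition \eqref{dc}, exploiting that a supremum/infimum of uniformly semiconvex functions is semiconvex, together with the hypothesis on $L$. Concretely, write
\[
u^{dc}(x)=\sup_w\inf_v\, F(x,v,w),\qquad
F(x,v,w)=u(x+v-w)+L(x-w,v)-L(x-w,w).
\]
The key idea is a change of variables that removes the dependence of the troublesome term on $x$: substituting $z=x-w$ (so $w=x-z$) gives
\[
u^{dc}(x)=\sup_{z}\inf_v\Bigl[u(z+v)+L(z,v)-L(z,x-z)\Bigr].
\]
Now the only place $x$ enters, besides the harmless inner part $u(z+v)+L(z,v)$ which does not depend on $x$ at all, is through the single term $-L(z,x-z)$. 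This is the decisive simplification.

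\textbf{Main steps.} First I would carry out the substitution above and record that, for each fixed $z$, the function $x\mapsto u(z+v)+L(z,v)-L(z,x-z)$ is, as a function of $x$, equal to a constant (in $x$) minus $L(z,x-z)$. By the hypothesis on $L$, applied with $\bar w=x-z$ and increment $y$, we have
\[
-L(z,x-z+y)-L(z,x-z-y)+2L(z,x-z)\ge -C|y|^2,
\]
equivalently $-L(z,\,\cdot\,-z)$ is $C$-semiconvex in $x$ with a constant uniform in $z$ (and in $v$). Hence for each fixed $z$ and $v$,
\[
g_{z,v}(x):=u(z+v)+L(z,v)-L(z,x-z)
\]
satisfies $g_{z,v}(x+y)+g_{z,v}(x-y)-2g_{z,v}(x)\ge -C|y|^2$. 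Second, I would invoke the elementary stability facts: a pointwise infimum over $v$ of functions that are all $C$-semiconvex in $x$ (with the same $C$) is $C$-semiconvex in $x$, and likewise a pointwise supremum over $z$ of $C$-semiconvex functions is $C$-semiconvex (for the supremum one must note the standard subtlety, handled by evaluating the sup-defining quantity at $x\pm y$ using a near-optimal $z$ for the point $x$, then passing to the sup; but since here the constant is uniform, the estimate goes through cleanly). Applying the infimum stability first and the supremum stability second yields
\[
u^{dc}(x+y)+u^{dc}(x-y)-2u^{dc}(x)\ge -C|y|^2,
\]
which is exactly the claim. Finally I would remark that periodicity of $u$ guarantees everything is finite and the sup/inf are genuinely attained (or approximated) on compact sets, so there are no issues with $\pm\infty$.

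\textbf{Expected obstacle.} The only genuinely delicate point is the semiconvexity of a supremum: unlike the infimum case, $\sup_z h_z$ need not inherit pointwise semiconvexity of the $h_z$ in general without a uniformity hypothesis. Here the hypothesis on $L$ provides exactly the needed uniform constant $C$, independent of $z$ and $v$, so the argument closes — but I would make sure to state the supremum-stability lemma carefully, choosing for the center point $x$ an almost-maximizing $z_0$ (so $u^{dc}(x)\le \inf_v F(z_0\text{-form})+\e$), using $z_0$ as a competitor at both $x+y$ and $x-y$, and then letting $\e\to0$. Everything else — the change of variables, the translation-invariance that makes $u(z+v)+L(z,v)$ drop out of the semiconvexity computation, and the infimum stability — is routine.
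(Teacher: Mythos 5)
Your proof is correct and is essentially the paper's argument written in different coordinates: the substitution $z=x-w$ turns the paper's choice of competitor $w=\bar w\pm y$ at the points $x\pm y$ into the choice of the \emph{same} near-optimal $z_0$ at all three points, and the hypothesis on $L$ applied with first argument $z_0$ and base point $x-z_0$ closes the estimate exactly as in the paper's chain of four inequalities. One caution: the general stability fact you invoke — that a pointwise infimum of uniformly $C$-semiconvex functions is $C$-semiconvex — is false (infima preserve semiconcavity, suprema preserve semiconvexity; your ``expected obstacle'' paragraph has the delicate case backwards, since the supremum step is the routine one, handled by the near-maximizer argument you correctly sketch). This does not damage the proof, because, as you yourself observe, for fixed $z$ the functions being infimized over $v$ differ only by constants in $x$ — all of the $x$-dependence sits in the single term $-L(z,x-z)$ — so the infimum merely adds an $x$-independent constant and the only stability lemma you actually need is the (correct) one for suprema.
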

\begin{proof}
Given $u$ and $x$ fixed, suppose $\bar{w}$ is such that
$$u^{dc}(x)  = \inf_v \left[
u(x+v-   \bar{w}   )+L(x-  \bar{w}     ,v)-L(x-  \bar{w}       ,  \bar{w}      ) \right]        .$$
Now we will estimate  $u^{dc}(x+y)$ and   $u^{dc}(x-y) $.
Taking $w= \bar{w} +y$
we get
$$  u^{dc}(x+y) \geq \inf_v \left[ \,u(x+v-   \bar{w}   )+\,L(x-  \bar{w}     ,v)\,-\,\, L(x-  \bar{w}       ,  \bar{w} +y     ) \right]    .$$
Taking $w= \bar{w} -y$
we get
\[
u^{dc}(x-y) \geq  \inf_v u(x+v-   \bar{w}   )+\,L(x-  \bar{w}     ,v)\,-\,\, L(x-  \bar{w}       ,  \bar{w} -y     ).
\]

Denote $v_1$ and $v_2$  vectors such that, respectively
$$  u^{dc}(x+y) \geq   \,u(x+v_1-   \bar{w}   )+\,L(x-  \bar{w}     ,v_1)\,-\,\, L(x-  \bar{w}       ,  \bar{w} +y     )    ,$$
and
$$  u^{dc}(x-y) \geq   \,u(x+v_2-   \bar{w}   )+\,L(x-  \bar{w}     ,v_2)\,-\,\, L(x-  \bar{w}       ,  \bar{w} -y     ).$$
Then,
$$ - \,u^{dc}(x)  \,\geq
 \,-\,u(x+v_1-   \bar{w}   )-\,\,L(x-  \bar{w}     ,v_1)\,+\,\, L(x-  \bar{w}       ,  \bar{w}      ),$$
$$ - \,u^{dc}(x)  \,\geq
 \,-\,u(x+v_2-   \bar{w}   )-\,\,L(x-  \bar{w}     ,v_2)\,+\,\, L(x-  \bar{w}       ,  \bar{w}      ).$$
Now, adding the last four expressions we get
$$  u^{dc}(x+y) +  u^{dc}(x-y) - 2 \, \,u^{dc}(x)\,\geq $$
$$\, -\,\, L(x-  \bar{w}       ,  \bar{w} +y     )\,-\,
L(x-  \bar{w}       ,  \bar{w} -y     )\,+\,2\, L(x-  \bar{w}       ,  \bar{w}      )  .$$
From this follows that
$$u^{dc}(x+y)+u^{dc}(x-y)-2 u^{dc}(x)\geq -C|y|^2.
$$

\end{proof}


\section{Minimax stationary Mather measures in continuous time}

In this section we
consider minimax stationary Mather measures in continuous time. Although these could seem the natural generalization of
the previous problems, we will
give a few examples which illustrate the main problems and motivate the definition and study of
minimax periodic Mather measures, in the next section.

\begin{Definition}
We say $ \mu=\theta(x)\pi(x,v)$ in $\Tt^n\times \Rr^n$ is holonomic (continuous time setting) if for any given $C^1$ function
$\varphi:\Tt^n\to \mathbb{R}$, we have
$$ \int v\, D_x\varphi\, d \pi(x,dv) \, \theta(dx) =0.$$
We denote the set of such probabilities by ${\cal H}$.
\end{Definition}
For a given probability $\theta$ over $M$ we denote $\Pi(\theta)$ the set of admissible plans
$\pi(x,v)$ on $TM$ such that $\theta \pi\in {\cal H}$.

Define
$$ g(\theta)\, =\, \inf_{\pi\in \Pi(\theta)}\, \int_{\Tt^n\times \Rr^n} L(x,v)\,
\pi(x,dv)\, \theta\,(dx).$$
For $\theta$ fixed, we denote by $\pi_\theta$ any solution of the minimization problem above.
Any probability measure $\theta_L$ which attains the supremum
of $g(\theta)$
is called a (continuous time) stationary minimax Mather measure, and sometimes, to simplify
notation,
we will also call $\theta_L \pi_L$, with $\pi_L\in \Pi(\theta_L)$, a minimax Mather measure.


We remark here that, as before, the functions $g(\theta)$ and $g_Q(\theta)$ are convex functions
of $\theta$ or $Q$ and $\theta$, respectively.

\begin{proposition}
If $\mu\,=\, \theta\,(dx) \, \pi(x,dv)$ is a minimax Mather measure then
there exists a function $v(x):\Tt^n\to \Rr^n$ such that
$\mu$ has support in a the graph $(x, v(x))$.
\end{proposition}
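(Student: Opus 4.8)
The plan is to run the classical Mather graph argument in the minimax setting, using strict convexity of $L$ in $v$ together with minimality of the plan. Recall that $\mu = \theta\,\pi$ being a minimax Mather measure means that $\theta$ maximizes $g$ and that $\pi=\pi_\theta$ attains the infimum defining $g(\theta)$, so that $\int L\,d(\theta\pi)=g(\theta)$.

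First I would check that $\mu$ has a finite first velocity moment. The plan $\pi'(x,\cdot)=\delta_{\{v=0\}}$ is admissible, since $\int v\,D_x\varphi\,\pi'(x,dv)\,\theta(dx)=0$ for every $C^1$ function $\varphi$; hence $g(\theta)\le \int_{\Tt^n} L(x,0)\,\theta(dx)\le \max_x L(x,0)<\infty$. By superlinearity of $L$ this forces $\int |v|\,d\mu<\infty$, so for $\theta$-a.e.\ $x$ the fibre measure $\pi(x,\cdot)$ has finite first moment and its barycenter
\[
v(x):=\int_{\Rr^n} w\,\pi(x,dw)
\]
is well defined; measurability of $x\mapsto v(x)$ follows from the measurable dependence of the disintegration (the Theorem quoted at the beginning).

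Next, let $\bar\mu:=\theta\,\delta_{v(x)}$, i.e. the measure with $\int f\,d\bar\mu=\int_{\Tt^n} f(x,v(x))\,\theta(dx)$. Since the holonomy functional is linear in the velocity, $\int v(x)\cdot D_x\varphi(x)\,\theta(dx)=\int w\cdot D_x\varphi(x)\,\pi(x,dw)\,\theta(dx)=0$, so $\bar\mu$ is again holonomic, i.e. the plan $x\mapsto\delta_{v(x)}$ lies in $\Pi(\theta)$. By strict convexity of $v\mapsto L(x,v)$ (the hypothesis $D^2_{vv}L\ge\gamma>0$) and Jensen's inequality, $L(x,v(x))\le\int_{\Rr^n} L(x,w)\,\pi(x,dw)$ for $\theta$-a.e.\ $x$, with equality if and only if $\pi(x,\cdot)=\delta_{v(x)}$. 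Integrating in $\theta$ gives $\int L\,d\bar\mu\le\int L\,d\mu=g(\theta)$, while the admissibility of $x\mapsto\delta_{v(x)}$ gives $\int L\,d\bar\mu\ge g(\theta)$. Hence equality holds throughout, the Jensen inequality is an equality $\theta$-a.e., and therefore $\pi(x,\cdot)=\delta_{v(x)}$ for $\theta$-a.e.\ $x$; that is, $\mu=\theta\,\delta_{v(\cdot)}$ is supported in the graph $(x,v(x))$.

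The only delicate points are the finiteness and moment bound of the first step — needed so that the barycenter exists and Jensen applies — and the measurable selection of $v(x)$ from the disintegration; both are routine once $\int|v|\,d\mu<\infty$ is in place, and the remainder is the standard strict-convexity argument. I expect no genuine obstacle beyond this bookkeeping, provided one reads ``minimax Mather measure'' as requiring $\pi$ to be a minimizing plan for $g(\theta)$, without which the statement would obviously fail.
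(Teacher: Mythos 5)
Your proposal is correct and follows essentially the same route as the paper: both take the fibrewise barycenter $v(x)=\int w\,\pi(x,dw)$, observe that the holonomy constraint is linear in $v$ so the plan $\delta_{v(x)}$ remains admissible, and invoke strict convexity of $L$ in $v$ (Jensen) together with the minimality of $\pi$ for $g(\theta)$ to force $\pi(x,\cdot)=\delta_{v(x)}$ $\theta$-a.e. The only difference is that you make explicit the finite first-moment and measurability bookkeeping that the paper leaves implicit.
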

\begin{proof}
This proof is similar, for instance,
 to the one in Theorem 3 in \cite{BG}, which considers the classical continuous time Aubry-Mather problem.
%
For each $x$, let $v(x) = \int v d \pi(x,dv)$ and $\eta(x,dv) = \delta_{v(x)} (dv).$
From the strict convexity we get that for each fixed $x$
$$ \int_{\mathbb{R}^d} L(x,v) d \, \eta(x\,, d\, v)\,<\,
\int_{\mathbb{R}^d} L(x,v) d \, \pi(x ,d\, v),$$
for any point $x$ where the probability  $ \delta_{v(x)} (dv)$ is different from $ \pi(x,dv).$
The probability $\theta( dx)\, \eta(x,dv)$ is holonomic. From, this it follows that $\mu$ has support on a graph.
\end{proof}


\begin{proposition}

The only rotation number for which there can exist a  minimax stationary Mather measure is $Q=0$.

\end{proposition}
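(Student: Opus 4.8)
The plan is to show that for any holonomic stationary minimax Mather measure $\mu = \theta\,\pi$, the rotation number $Q = \int v\, d\mu$ must vanish. The key observation is the holonomic constraint itself: testing it against an appropriate family of functions $\varphi$ forces the average velocity to be zero. Concretely, I would first recall that by Proposition~8.3 the minimax measure is supported on a graph $(x, v(x))$, so $Q = \int_{\Tt^n} v(x)\,\theta(dx)$.

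\medskip

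First I would fix a direction $e_i$ of $\Rr^n$ and try to use the holonomic constraint $\int v\cdot D_x\varphi\, d\mu = 0$ with $\varphi$ chosen so that $D_x\varphi \equiv e_i$. Of course no such \emph{periodic} $C^1$ function exists on $\Tt^n$, which is exactly why the argument is delicate and why the conclusion is $Q=0$ rather than $Q$ arbitrary. The correct route is instead to argue that a stationary holonomic measure is invariant under the Euler--Lagrange flow (this is the standard fact that minimizers of $g(\theta)$ over $\Pi(\theta)$ with $\theta$ free produce flow-invariant measures, cf.\ the references \cite{CI}, \cite{Fa}, \cite{BG} cited in the paper), and a flow-invariant measure supported on a graph over $\Tt^n$ has a well-defined rotation vector; the point is that for a \emph{stationary} (rather than $T$-periodic) object the graph must be an invariant graph of an exact Lagrangian, forcing the closed $1$-form $v(x)\,dx$ to be exact after symmetrization, hence $\int v(x)\,\theta(dx)=0$.

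\medskip

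Alternatively — and this is probably the cleaner path to write down — I would argue directly from the variational characterization. Since $\theta$ ranges over \emph{all} probability measures and $g$ is convex, by Proposition~6.5 there is a point-mass maximizer $\theta^* = \delta_{x^*}$. For $\theta = \delta_{x^*}$, the holonomic constraint $\int v\cdot D_x\varphi(x^*)\,\pi(x^*,dv) = 0$ must hold for every $C^1$ periodic $\varphi$; since $D_x\varphi(x^*)$ ranges over \emph{all} of $\Rr^n$ as $\varphi$ varies (one can realize any prescribed gradient at a single point), we get $\int v\,\pi(x^*,dv) = 0$, i.e.\ $Q = 0$. This is the heart of the matter, and the main obstacle is to justify carefully that at a single point $x^*$ the gradients $D_x\varphi(x^*)$ of smooth periodic functions fill out $\Rr^n$ — which is elementary (bump functions) but should be stated — and to handle the case where the maximizer is not literally a point mass, by first reducing to a point mass via Proposition~6.5 and then noting that $Q$ is determined by $\int v\,d\mu$, which is continuous along the approximating sequence constructed there.

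\medskip

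I expect the main subtlety to be reconciling the two natural definitions: the rotation vector $Q$ as a constraint (as in $\Pi(\theta,Q)$) versus $Q = \int v\,d\mu$ as an output of the measure. In the stationary continuous-time setting these coincide, and once one fixes $\theta = \delta_{x^*}$ the holonomic constraint collapses to $\int v\,\pi(x^*,dv)=0$ directly. So the proof should be short: invoke Proposition~6.5 (adapted to the continuous-time $g$) to get a point-mass maximizer $\delta_{x^*}$, observe that holonomy at a single point against arbitrary smooth periodic test functions forces the barycenter of $\pi(x^*,\cdot)$ to be $0$, and conclude $Q=0$. The remaining care is simply to note that if a minimax measure with $Q\neq 0$ existed, its associated $g_Q$-maximizer could likewise be taken a point mass, contradicting the same computation.
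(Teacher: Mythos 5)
Your ``cleaner path'' is essentially the paper's own argument: both reduce to a point-mass maximizer $\delta_{x^*}$ by the convexity/subdivision argument and then observe that the holonomic constraint at a single point forces the velocity part of the measure to vanish, hence $Q=0$. The only (minor) difference is that the paper first invokes the graph property to write the measure as $\delta_{x_0}(x)\delta_{v_0}(v)$ and then notes $v_0\neq 0$ violates holonomy, whereas you test the constraint directly against bump functions to kill the barycenter of $\pi(x^*,\cdot)$ without needing the graph theorem --- a slightly more economical version of the same step.
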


\begin{proof}
Since the function $g_Q(\theta)$ is convex, applying the same reasoning as before, if there exists
a maximizing measure, there exists a maximizing measure $\bar \theta$ supported in a single point.
From the graph theorem we conclude the corresponding minimax stationary Mather measure has the form $\delta_{x_0}(x) \delta_{v_0}(v)$.
It is clear also that unless $v_0=0$ this measure is not holonomic. Thus its rotation number must be $0$. This shows that
the only rotation number for which there can exist a  minimax stationary Mather measure is $Q=0$.
\end{proof}

\section{Minimax periodic Mather measures}

To overcome the non-existence issues in the previous section and study the continuous time problems,
 we consider the following setting: let $\theta$ be a fixed probability on $\Tt^n $, and we define
\[
g(\theta)=\inf_{\rho} \int_0^T \int_{\Tt^n\times \Rr^n} L(x, v )d \rho(x, v,t),
\]
over all measures $\rho$ on $[0,T]\times \Tt^n\times \Rr^n$ which satisfy, for all smooth $\varphi(x,t)$, $x \in \Tt^n$, $t\in [0,T]$,
\begin{equation}
\label{hnp}
\int_0^T \int_{\Tt^n\times \Rr^n} \varphi_t+v D_x\varphi \,d\rho=\int_{\Tt^n} \varphi(x,T)-\varphi(x,0)\,d \theta.
\end{equation}

We denote such set by $\Pi(T,\theta)$.
We may as well add the rotation number constraint
\begin{equation}
\label{rnp}
\int_0^T \int_{\Tt^n\times \Rr^n} v d\rho =Q.
\end{equation}
We denote by $\Pi(T,\theta,Q)$ the set of measures
$\rho$ on $[0,T]\times \Tt^n\times \Rr^n$
which satisfy the two constraints above. Using the same notation as before, we consider $g_Q^T(\theta)$ for the minimization of $\rho \in \Pi(T, \theta,Q)$.

We refer the reader to \cite{BB} for several results on Mather theory which are similar to the minimax setting we consider here (for autonomous Lagrangians). In the notation of \cite{BB} we are considering the set of initial transport measures on ${\cal I}(\theta,\theta)$ and $\rho$ is a transport measure (see definition 5 on that paper).

\begin{proposition}
For every probability measure $\theta$ on $ \Tt^n$ and every rotation number $Q$ there exists a measure $\rho_Q$ which satisfies \eqref{hnp} and \eqref{rnp}.
\end{proposition}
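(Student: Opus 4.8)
The plan is to exhibit $\rho_Q$ explicitly, by transporting $\theta$ rigidly around the torus, and to reduce a general real $Q$ to the case of integer rotation vectors. First I would observe that both constraints \eqref{hnp} and \eqref{rnp} are affine in $\rho$ and that the admissible $\rho$ form a convex set, so it suffices to produce, for each $k\in\Zz^n$, a measure $\rho_k$ satisfying \eqref{hnp} together with $\int_0^T\int v\,d\rho_k=k$. Then, writing $Q\in\Rr^n$ as a convex combination $Q=\sum_{j}\lambda_j k_j$ with $k_j\in\Zz^n$, $\lambda_j\ge 0$, $\sum_j\lambda_j=1$ — possible for every $Q$ because the convex hull of $\Zz^n$ is all of $\Rr^n$ (concretely, put $Q$ inside a unit cube with integer corners and use those $2^n$ corners) — the measure $\rho_Q:=\sum_j\lambda_j\rho_{k_j}$ satisfies \eqref{hnp} (each $\rho_{k_j}$ satisfies it with the same right-hand side and the weights sum to one) and \eqref{rnp}, since $\int_0^T\int v\,d\rho_Q=\sum_j\lambda_j k_j=Q$.

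To construct $\rho_k$ I would let it be the image of the finite measure $d\theta(x_0)\otimes dt$ on $\Tt^n\times[0,T]$ under the map sending $(x_0,t)$ to the phase-space point with position $x_0+\tfrac tT k$, velocity $\tfrac kT$, at time $t$. For a smooth $\varphi(x,t)$ the integrand $\varphi_t+v\,D_x\varphi$ evaluated along this curve is exactly $\tfrac{d}{dt}\varphi\bigl(x_0+\tfrac tT k,t\bigr)$, so integrating in $t$ over $[0,T]$ gives $\varphi(x_0+k,T)-\varphi(x_0,0)$, which equals $\varphi(x_0,T)-\varphi(x_0,0)$ by the $\Zz^n$-periodicity of $\varphi$ in $x$; integrating against $\theta$ then yields precisely the right-hand side of \eqref{hnp}. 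The rotation identity is immediate: $\int_0^T\int v\,d\rho_k=\int_{\Tt^n}\int_0^T\tfrac kT\,dt\,d\theta=k$.

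Since the whole argument is an explicit construction, I do not expect a genuine analytic obstacle. The one point to be slightly careful about is the reduction to integer rotation vectors: a single family of loops closing up on the torus can only have integer total displacement, so a general real $Q$ is reached only by superposing loops with different integer winding vectors, which is exactly what the convex-combination step accomplishes. It is also worth recording, to avoid confusion later, that $\rho_Q$ is not a probability measure — testing \eqref{hnp} against functions of $t$ alone forces the time-marginal of $\rho_Q$ to be Lebesgue measure on $[0,T]$, so its total mass is $T$.
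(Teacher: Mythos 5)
Your proof is correct and follows essentially the same strategy as the paper's: construct an explicit measure for each integer total displacement $k\in\Zz^n$ and reach a general $Q\in\Rr^n$ by convex combinations, using that the convex hull of $\Zz^n$ is all of $\Rr^n$. If anything, your version is the more careful one: the paper's displayed formula $\tfrac 1T\,\delta(v-Q)\,\theta(x)$ is static in time and, read literally, does not satisfy the holonomy constraint for a general $\theta$ (nor the normalization forced by testing against $\varphi=t$), whereas your pushforward of $d\theta(x_0)\otimes dt$ along the lines $x_0+\tfrac tT k$ with velocity $\tfrac kT$ is exactly the transported measure that makes both constraints check out.
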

\begin{proof}
If $QT\in \Zz^n$ then it suffices to consider
$\rho_Q(t,x,v)= \frac{1}{T} \, \delta(v-Q)\, \theta(x).$
Otherwise
we can always write $Q$ as a convex combination $Q=\sum_i\lambda_iQ_i$ of vectors $Q_i\in \Rr^n$. Then define
\[
\rho_Q=\sum \lambda_i \rho_{Q_i}.
\]
Therefore, $\Pi(T,\theta,Q)$ is not empty.
\end{proof}

The minimax periodic Mather problem consists in maximizing
\[
\hat L(Q)=\sup_{\theta} g_Q^T(\theta).
\]

We call the  measure $\rho$ which realizes such problem  of $T$-minimax probability.

By convexity on $v$ and using a standard weak convergence argument, we can prove that for each $\theta$ there exists $\rho$ such that
$$g_Q(\theta)=\int_0^T \int_{\Tt^n\times \Rr^n} L(x, v )d \rho(x, v,t).$$

Consider a sequence $\theta_n$  such that $g_Q(\theta_n) \to \hat L$, when $n \to \infty$.  One can consider weak limits of subsequences of the probabilities $\theta_n$ over $\Tt^n$, and getting in this way a limit probability measure which we denote by $\theta$.
In the same way as before (discrete time case) we want to show that $g_Q(\theta_n) \to g_Q(\theta)$, whenever $\theta_n\rightharpoonup \theta$.
Assume for now that this is true.
Then Given a certain $\theta$ there exist a minimizer $\rho$ for $g_Q(\theta)$.  Then, $\rho$ is a minimax probability for such $Q$.

In order to show that  $g_Q(\theta_n) \to g_Q(\theta)$,
we will consider once more the metric $d(\theta_1,\theta_2)$, for     $ \theta_1,\theta_2\in       P(\Tt^n)$,  defined before.

\begin{proposition}
$g_Q^T$ is continuous on $\theta$.
\end{proposition}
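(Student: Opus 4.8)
The plan is to mimic the continuity argument already carried out for the discrete-time functional $g$ (Theorem in Section 4), replacing the single-step gluing construction with a time-indexed one. Fix a rotation vector $Q$ and suppose $d(\theta_k,\theta)\to 0$. Let $\rho_k\in\Pi(T,\theta_k,Q)$ and $\rho\in\Pi(T,\theta,Q)$ be optimal measures for $g_Q^T(\theta_k)$ and $g_Q^T(\theta)$ respectively; these exist by the weak-convergence argument recalled just before the statement. As in Section 4, disintegrate the optimal transport plans realizing $W(\theta_k,\theta)$ and $W(\theta,\theta_k)$ as $\mu_{0,k}=\nu_{0,k}\theta$ and $\mu_{k,0}=\nu_{k,0}\theta_k$, so that they transport $\theta$ to $\theta_k$ and back with quadratic cost controlled by $d$, and recall from the Lemma in Section 4 that $\int v\,d\mu_{0,k}=-\int v\,d\mu_{k,0}$, so the net displacement vector cancels and the rotation constraint is preserved under gluing.

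The key step is to build, from $\rho_k$, a competitor $\rho^\ast\in\Pi(T,\theta,Q)$ whose action is close to $g_Q^T(\theta_k)$. I would do the gluing only at the initial time: prepend the transport $\nu_{0,k}$ to push $\theta$ onto $\theta_k$, then run $\rho_k$ on $[0,T]$, then append $\nu_{k,0}$ to push the terminal marginal $\theta_k$ back to $\theta$. Concretely, form the product measure $\Xi$ on $\Tt^n\times(\Rr^n)^3$ (exactly as in Section 4, with $v_1$ the initial displacement, $v_2$ the increment recorded by $\rho_k$, $v_3$ the final displacement), but now carry the extra time variable $t\in[0,T]$ along with $\rho_k$, and define $\rho^\ast$ on $[0,T]\times\Tt^n\times\Rr^n$ by spreading the two instantaneous jumps $v_1,v_3$ over short boundary intervals (or, more cleanly, by testing against $\varphi(x,t)$ and using $\int_0^T\varphi_t+vD_x\varphi\,d\rho^\ast=\int\varphi(x,T)-\varphi(x,0)\,d\theta$: the $\nu_{0,k}$ piece contributes $\int\varphi(x,0)\,d\theta_k-\int\varphi(x,0)\,d\theta$ after integrating the jump, the $\rho_k$ piece contributes $\int\varphi(x,T)-\varphi(x,0)\,d\theta_k$, and the $\nu_{k,0}$ piece contributes $\int\varphi(x,T)\,d\theta-\int\varphi(x,T)\,d\theta_k$, and these telescope to the required right-hand side). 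The Lemma in Section 4 guarantees $\int_0^T\int v\,d\rho^\ast=Q$, so $\rho^\ast\in\Pi(T,\theta,Q)$. Then, using the bounds $c|v|^2-C\le L\le C|v|+C$ and the quadratic estimate $ab\le\epsilon a^2+b^2/\epsilon$ together with Taylor expansion of $L$ — precisely the computation displayed in the proof of the Theorem in Section 4 — one obtains
\[
g_Q^T(\theta)\le \int_0^T\int L\,d\rho^\ast\le g_Q^T(\theta_k)+C\epsilon+C_\epsilon\big(W(\theta,\theta_k)+W(\theta_k,\theta)\big).
\]
Letting $k\to\infty$ and then $\epsilon\to 0$ gives $g_Q^T(\theta)\le\liminf_k g_Q^T(\theta_k)$; the reverse inequality follows by symmetry (swap the roles of $\theta$ and $\theta_k$), yielding continuity.

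The main obstacle I expect is purely bookkeeping: making the time-extension of the gluing construction rigorous so that $\rho^\ast$ genuinely satisfies the holonomy identity \eqref{hnp} for \emph{all} smooth test functions $\varphi(x,t)$, not just $t$-independent ones — the two boundary transport steps are instantaneous displacements at $t=0$ and $t=T$, and one must check they integrate correctly against the $\varphi_t$ term. This is handled either by regularizing the jumps into absolutely continuous pieces on $[0,\delta]$ and $[T-\delta,T]$ and sending $\delta\to0$, or by working directly at the level of the defining identity as sketched above; either way it is routine. Everything else — the cost estimate, the rotation-constraint preservation, the symmetry argument — transfers verbatim from Section 4.
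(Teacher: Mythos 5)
Your overall strategy (glue transport plans between $\theta$ and $\theta_k$ onto the ends of the optimal $\rho_k$, invoke the Lemma of Section 4 so that the two boundary displacements cancel and the rotation constraint survives, then estimate the extra action) is the same gluing idea the paper uses. But the specific mechanism you propose has a genuine gap. An instantaneous displacement at $t=0$ or $t=T$ cannot be encoded as a measure satisfying \eqref{hnp}: that identity is a continuity-equation constraint, and your telescoping computation silently replaces the continuous-time contribution $\int v\cdot D_x\varphi\,d\rho$ of the jump by the discrete-time increment $\int[\varphi(x+v_1,0)-\varphi(x,0)]\,d\mu_{0,k}$; these differ by $O(|v_1|^2)$ terms, so the identity does not close. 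Once you regularize the jump over an interval $[0,\delta]$, the boundary layer carries velocity of order $v_1/\delta$ and hence, by the quadratic lower bound on $L$, action of order $d(\theta_k,\theta)/\delta$; your fallback of ``sending $\delta\to 0$'' at fixed $k$ therefore makes the competitor's action blow up rather than converge. Moreover, running $\rho_k$ on all of $[0,T]$ while adding boundary layers produces a transport over time $T+2\delta$, not $T$.

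The paper's (admittedly sketchy) proof fixes all three points at once: it reserves boundary layers of a \emph{fixed} width $\epsilon$, uses genuine time-$\epsilon$ transport measures $\rho_{01},\rho_{10}$ whose action is small once $W(\theta_k,\theta)\ll\epsilon$ (the relevant estimate is that the minimal action of a time-$\epsilon$ transport is of order $C\epsilon+W_2(\theta_k,\theta)^2/\epsilon$, not the Taylor-expansion inequality of Section 4, which does not transfer verbatim to continuous time), and compresses $\rho_k$ into $[\epsilon,T-\epsilon]$ by a time rescaling, which costs only $C\epsilon$ in action and preserves the total displacement $\int v\,d\rho_k$, hence the rotation number. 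The limits must then be taken in the order $k\to\infty$ first, $\epsilon\to 0$ second. Your final displayed inequality has the right shape, but as written it is imported from the discrete computation rather than derived from your construction; the time-rescaling step and the $W_2^2/\epsilon$ boundary estimate are the missing ingredients you would need to supply.
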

\begin{proof}
The proof is similar to the one in section 4.
We describe the main idea, omitting the details.  Given $\epsilon$, and $\theta_0$ and $\theta_1$ whose Wasserstein distance is suitably small there is
a transport measure $\rho_{01}$ and $\rho_{10}$  in time $\epsilon$ such that
\[
\int L d\rho_{01}, \int L d\rho_{10}<\epsilon.
\]
Also given a measure $\rho_1$ which is a minimizer for $g(\theta_1)$ in time $T$ we can build another measure $\rho_1^\epsilon$ on $[\epsilon, T-\epsilon]$
such that
\[
\int L d\rho_1^\epsilon< g(\theta_1)+C \epsilon
\]
Then we consider the concatenation of $\rho_{01}, \rho_1, $ and $\rho_{10}$ and we obtain
\[
g(\theta_0)\leq g(\theta_1)+C \epsilon.
\]
\end{proof}

From this result and the fact that $\Pi(T,\theta,Q)$ is not empty, we get finally the existence of a minimax measure
$\rho$ for $g_Q$ in $\Pi(T,\theta,Q)$.

\begin{proposition}
$g(\theta)$ and
$g_Q(\theta)$ convex, resp. on $\theta$ and $\theta$ and $Q$.
\end{proposition}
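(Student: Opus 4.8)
The plan is to repeat, almost verbatim, the argument used for the discrete‑time convexity proposition in Section~4. The whole point is that the constraints \eqref{hnp} and \eqref{rnp} defining $\Pi(T,\theta,Q)$ are affine in the triple $(\rho,\theta,Q)$, and the cost $\rho\mapsto\int_0^T\int_{\Tt^n\times\Rr^n}L\,d\rho$ is linear in $\rho$; hence $g_Q=g_Q^T$ is the partial minimization of a linear functional over a convex set, which is automatically convex, and the same holds for $g=g^T$ with only \eqref{hnp} in force.

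Concretely, I would fix probabilities $\theta_0,\theta_1$ on $\Tt^n$, rotation vectors $Q_0,Q_1\in\Rr^n$, and $\lambda\in[0,1]$, and set $\theta_\lambda=(1-\lambda)\theta_0+\lambda\theta_1$ and $Q_\lambda=(1-\lambda)Q_0+\lambda Q_1$. Choosing minimizers $\rho_i\in\Pi(T,\theta_i,Q_i)$ for $g_{Q_i}(\theta_i)$ — these exist by the weak‑convergence argument recorded above, or one may simply use $\e$‑minimizers and let $\e\to0$ — I put $\rho_\lambda=(1-\lambda)\rho_0+\lambda\rho_1$. This $\rho_\lambda$ is a non‑negative measure on $[0,T]\times\Tt^n\times\Rr^n$ with the correct total mass, and for every smooth $\varphi(x,t)$ one has
\[
\int_0^T\!\!\int(\varphi_t+vD_x\varphi)\,d\rho_\lambda=(1-\lambda)\!\int_{\Tt^n}\!(\varphi(x,T)-\varphi(x,0))\,d\theta_0+\lambda\!\int_{\Tt^n}\!(\varphi(x,T)-\varphi(x,0))\,d\theta_1=\int_{\Tt^n}\!(\varphi(x,T)-\varphi(x,0))\,d\theta_\lambda,
\]
so $\rho_\lambda$ satisfies \eqref{hnp} with datum $\theta_\lambda$; likewise $\int_0^T\int v\,d\rho_\lambda=(1-\lambda)Q_0+\lambda Q_1=Q_\lambda$, so it satisfies \eqref{rnp}. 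Hence $\rho_\lambda\in\Pi(T,\theta_\lambda,Q_\lambda)$.

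Finally, by linearity of the cost in $\rho$,
\[
g_{Q_\lambda}(\theta_\lambda)\le \int_0^T\!\!\int L\,d\rho_\lambda=(1-\lambda)\int_0^T\!\!\int L\,d\rho_0+\lambda\int_0^T\!\!\int L\,d\rho_1=(1-\lambda)g_{Q_0}(\theta_0)+\lambda g_{Q_1}(\theta_1),
\]
which is the asserted joint convexity in $(\theta,Q)$; setting $Q_0=Q_1=Q$ gives convexity of $\theta\mapsto g_Q(\theta)$, and the statement for $g(\theta)$ is identical, using only \eqref{hnp} and $\Pi(T,\theta)$. I do not expect any real obstacle: the entire content is the affine dependence of \eqref{hnp}–\eqref{rnp} on $(\rho,\theta,Q)$ together with linearity of $\int L\,d\rho$. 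The only points worth a sentence are that the competing set is nonempty (by the preceding proposition, so $\rho_\lambda$ genuinely enters the infimum defining $g_{Q_\lambda}(\theta_\lambda)$) and the harmless replacement of exact minimizers by $\e$‑minimizers if one prefers not to invoke existence.
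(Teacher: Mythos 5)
Your proof is correct and follows essentially the same route as the paper's: form the convex combination $\rho_\lambda=(1-\lambda)\rho_0+\lambda\rho_1$, observe that the affine constraints place it in $\Pi(T,\theta_\lambda,Q_\lambda)$, and use linearity of the cost. If anything, your write-up is slightly more complete, since you explicitly verify the constraint equations and treat the joint convexity in $(\theta,Q)$, which the paper's proof only states for fixed $Q$.
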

\begin{proof}
We will consider the case of $g_Q$ as the proof for $g$ is similar.
Let $\theta_i$, probability measures on $\Tt^n$ with rotation vector $Q$  on $\Rr^n$.
Let $\rho_i\in \Pi(T,\theta_i, Q)$. For $0\leq \lambda\leq 1$ define
$\theta_\lambda=(1-\lambda) \theta_0+\lambda \theta_1$.
Let
$\rho_\lambda$ be the plan in $\Pi(T,\theta_\lambda, Q)$ such that
\[
\rho_\lambda=(1-\lambda) \rho_0+\lambda \rho_1.
\]
Then
\[
g_Q(\theta_\lambda)\leq \int_0^T \int L \rho_\lambda =(1-\lambda) \int _0^T \int L \rho_0+\lambda \int_0^T\int L \rho_1.
\]
By taking the infimum over all plans $\rho_i$ we obtain
\[
g_Q(\theta_\lambda)\leq (1-\lambda) g_Q(\theta_0)+\lambda g_Q(\theta_1).
\]
\end{proof}

\begin{theorem}
\label{84}
For a fixed $T>0$,
there exists a
minimax Mather measure $\theta$ for $g_Q$ which is supported in a single point.
\end{theorem}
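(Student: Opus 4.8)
\noindent\emph{Plan of proof.} This is the time-periodic counterpart of the discrete-time statement that $g$ admits a point-mass maximizer, and the plan is simply to reproduce that dyadic-subdivision argument with the admissible plans $\pi$ replaced by the periodic transport measures $\rho\in\Pi(T,\theta,Q)$. The needed ingredients are all at hand: a maximizer $\theta^0$ of $g_Q^T$ exists (as noted just above, this follows from weak-$*$ compactness of $P(\Tt^n)$, non-emptiness of $\Pi(T,\theta,Q)$, and continuity of $g_Q^T$ with respect to the metric $d$); $g_Q^T$ is convex in $\theta$, since the constraints \eqref{hnp} and \eqref{rnp} are affine in the pair $(\rho,\theta)$, so a convex combination of admissible plans for $\theta_0$ and $\theta_1$ is an admissible plan for $\theta_\lambda=(1-\lambda)\theta_0+\lambda\theta_1$; and $d$, sharing the usual properties of the Wasserstein metric, controls the convergence of measures supported in shrinking cubes.

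Granting these, I would argue as follows. Identify $\Tt^n$ with the unit cube and start from a maximizer $\theta^0$. Inductively I build maximizers $\theta^k$ supported in a cube of side $2^{-k}$: given $\theta^k$, subdivide its supporting cube into $2^n$ congruent subcubes $\{C_j\}$, set $\lambda_j=\theta^k(C_j)$ and, for $\lambda_j>0$, $\theta^k_j=\lambda_j^{-1}\,\theta^k 1_{C_j}$, so that $\theta^k=\sum_j\lambda_j\theta^k_j$ with $\sum_j\lambda_j=1$. Convexity gives $g_Q^T(\theta^k)\le\sum_j\lambda_j g_Q^T(\theta^k_j)$, and since $\theta^k$ is maximizing while each $g_Q^T(\theta^k_j)$ is at most the supremum, this forces $g_Q^T(\theta^k_j)=g_Q^T(\theta^k)$ for every $j$ with $\lambda_j>0$; picking one such index and setting $\theta^{k+1}=\theta^k_j$ produces a maximizer supported in a cube of side $2^{-(k+1)}$. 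Since $\theta^k$ and $\theta^{k+1}$ are both supported in the cube of side $2^{-k}$, they can be coupled with displacement $|v|\le\sqrt n\,2^{-k}$, so $d(\theta^k,\theta^{k+1})=O(2^{-2k})$ and $(\theta^k)$ is $d$-Cauchy; its limit $\theta^*$ is supported in the (one-point) intersection of the nested cubes, i.e.\ $\theta^*=\delta_{x^*}$ for some $x^*\in\Tt^n$. Continuity of $g_Q^T$ then yields $g_Q^T(\delta_{x^*})=\lim_k g_Q^T(\theta^k)=\sup_\theta g_Q^T(\theta)=\hat L(Q)$, so $\delta_{x^*}$ is the desired minimax Mather measure supported in a single point.

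The one point that deserves care, and which I expect to be the only genuine obstacle, is that the metric $d$ is built from the diagonal-projection coupling rather than the ordinary Wasserstein coupling; one must check that $d$ still metrizes weak-$*$ convergence on $P(\Tt^n)$ (so that maximizing sequences subconverge to maximizers, and so that continuity of $g_Q^T$ can be invoked in the last step) and that a nested sequence of probabilities on cubes of side $2^{-k}$ indeed $d$-converges to the point mass at the common point. Both follow from the fact, already recorded in the text, that $d$ enjoys the usual properties of the Wasserstein distance, together with the observation that on the compact torus the displacement $v$ appearing in $\Pi[\theta_1,\theta_2]$ may be taken bounded. With this in hand the argument is a straightforward transcription of the discrete-time proof, and the convexity and continuity of $g_Q^T$ used above are exactly those established in the preceding propositions.
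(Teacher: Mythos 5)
Your proposal is correct and follows exactly the route the paper intends: the paper's own proof of Theorem \ref{84} consists of the single remark that it is ``analogous to the last proposition of section 4,'' i.e.\ the dyadic-subdivision argument using convexity of $g_Q^T$ in $\theta$ and continuity with respect to the metric $d$, which is precisely what you carry out (and in rather more detail than the paper itself). Your closing caveat about $d$ metrizing the relevant convergence on nested shrinking cubes is a reasonable point of care, but it does not change the fact that the approach coincides with the paper's.
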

\begin{proof}
The proof uses again a convexity argument and is analogous to the  one of the last proposition of section 4.
\end{proof}

A similar result is true for $g$ over $\Pi(T,\theta)$.

\section{Additional examples}

Consider in $T \Tt^2$ the Lagrangian
\[
L(x,v) =
L(x_1,x_2,v_1,v_2)=\frac{v_1^2+v_2^2}{2}  + v_1.
\]
First we consider the case without rotation number constraint.

As we have discussed before, the minimax measure problem can be analyzed by considering minimax orbits
associated to the time $T$. In other words, from theorem \ref{84}, we just have to
consider probability measures $\theta$ supported in a single point, that is,
of the form $\theta= \delta_x$, for each $x\in\mathbb{T}^2$.
The plan $\rho(x,v,t)$ is obtained by linear superposition of plans associated with
curves $\gamma:[0,T]\to \Tt^2$ which are solution to the
Euler-Lagrange equation (see \cite{BB}) and satisfy periodic conditions $\gamma(0)=\gamma(T)=x$.
Since this Lagrangian only depends on the velocity,
 we known that $\gamma$ is either a straight line with
constant velocity or a constant trajectory $\gamma(t)=x$, for all
$t\in[0,T]$. In the last case the action is zero. Denote by
$n_1(\gamma)=\int_\gamma v_1= \int_\gamma dx_1$ the algebraic number
of turn around  in the $x_1$ direction. In this case $\gamma$ is
non constant, because $\gamma$ has to be periodic. Because of minimality,
it is clear that $v_2=0$.
Thus it is a horizontal
line and the velocity has constant value equal to $
l(\gamma)/T$, where $l(\gamma)$ is the length of the curve. In this case we have several different  measures on the tangent bundle associated to different horizontal lines. The action of
$\gamma$ is $\int_\gamma L(x,v) = T \, \frac{l(\gamma)^2}{2\,
T^2} + n(\gamma)= \frac{l(\gamma)^2}{2\, T} + n(\gamma)$. If the
trajectory $\gamma$ turns around in the $x_1$ direction $n(\gamma)$
times over a straight line trough $x$, then the action is
$$\int_\gamma L(x,v) =\frac{n(\gamma)^2}{2\, T} + n(\gamma).$$

For $1/2< T<3/2$, the value $n(\gamma)=-1$ is optimal. For $0<T<1/2$
the optimal value is  $n(\gamma)=0$ and we get the constant
trajectory. For $T> 3/2$ one can get values $n(\gamma)<-1$, as $n(\gamma)\sim -T$ for large $T$.

Note that the properties described above are independent of $x$.
Therefore, for example, for $1/2<T<3/2$,  the maximization of
$g(\theta)$ gives all different possibilities of horizontal lines
trough $x$, with $x\in \mathbb{T}^2$. Then, in this case, for each
fixed $T$ we get minimal plans which are convex combination of a
continuum of probabilities. In this case we do not have uniqueness.

We point out the difference of the minimax problem to the usual
Mather problem (in which the period $T$ is not fixed) in the present
case. The Mane critical value is $c(L)=1/2$ and the minimizing
probabilities are given by $1$-periodic curves $\gamma$ which are horizontal straight lines which satisfy
$n(\gamma)=-1$.

Now consider the case of a fixed a vector $Q=(Q_1,Q_2)\in \Rr^2$ and we look for $Q$-minimax measures. As before, we assume
$\theta=\delta_x$.
As we haved pointed out before, the plan transport plan $\rho(x, v, t)$ can consist on a superposition of transport plans associated with
several trajectories solutions to
the Euler-Lagrange equations. Then one has the family of $T$-periodic curves $\gamma_k$ solving the Euler-Lagrange equation
passing through $x$. These curves
have constant velocity, and are indexed by
$k\in \Zz^2$, where
\[
k=n(\gamma_k)=\int_0^T v dt.
\]
is the algebraic number of turns. Clearly there exists $0\leq \lambda_k\leq 1$ with $\sum_k \lambda_k=1$ so that the minimax transport
plan can be written as
\begin{equation}
\label{finitesum}
\sum_k \lambda_k \rho_k(x, v, t),
\end{equation}
where $\rho_k$ is the transport plan associated with the curve $\gamma_k$. Since the action of $\gamma_k$ is
\[
\frac{|k|^2}{2 T}+k_1,
\]
the sum in \eqref{finitesum} is a finite sum. Also note that the mapping $k\mapsto \frac{|k|^2}{2 T}+k_1$ is strictly convex,
therefore if for some $k^*=Q$ then $\lambda_{k^*}=1$. For all other values
\[
\hat L(Q)=\inf_{\lambda} \sum_k \lambda_k \rho_k(x, v, t),
\]
under the constraint $\sum_k \lambda_k k=Q$.

\medskip

As a second example, consider
a general Lagrangian $L(x, v)$ on $T\Tt^n$.  Fix $T>0$.
For each $k\in \Zz^n$
and any $x\in \Tt^n$
look for a minimal orbit starting at $x$ and ending at $x$ with rotation number $k$, $\gamma_k$, and let $S_k(x)$
be the action of such an orbit. Note that this orbit may not be a periodic solution to the Euler-Lagrange equation.

Define
\[
g_Q(x)=\inf_{\lambda} \sum \lambda_k S_k(x),
\]
where $\lambda$ is constrained to $0\leq \lambda_k\leq 1$, $\sum_k \lambda_k=1$, and
\[
\sum_k \lambda_k k =Q.
\]
Then
\[
\hat L(Q)=\sup_x g_Q(x).
\]

\end{document}